\def\mvint_#1{\mathchoice
          {\mathop{\vrule width 6pt height 3 pt depth -2.5pt
                  \kern -9pt \intop}\limits_{\kern -3pt #1}}%
%%%% P.\Sph., 01/03/2001
% old definition had ...\nolimits_{#1}}
% \kern -3pt makes nicer distances between the integral sign
% and the domain of integration
%%%%
          {\mathop{\vrule width 5pt height 3 pt depth -2.6pt
                  \kern -6pt \intop}\nolimits_{#1}}%
          {\mathop{\vrule width 5pt height 3 pt depth -2.6pt
                  \kern -6pt \intop}\nolimits_{#1}}%
          {\mathop{\vrule width 5pt height 3 pt depth -2.6pt
                  \kern -6pt \intop}\nolimits_{#1}}}
\newcommand{\MM}{\mathcal M}
\newcommand{\HH}{\mathcal H}
\newcommand{\bbbr}{\mathbb R}
\newcommand{\bbbs}{\mathbb S}
\newcommand{\N}{\mathbb N}
\newcommand{\R}{\mathbb R}
\newcommand{\overbar}[1]{\mkern 1.7mu\overline{\mkern-1.7mu#1\mkern-1.5mu}\mkern 1.5mu}
\newcommand{\eps}{\varepsilon}
\def\dist{\operatorname{dist}}
\newtheorem{theorem}{Theorem}[section]
\newtheorem*{theorem*}{Theorem}
\newtheorem{lemma}[theorem]{Lemma}
\newtheorem{corollary}[theorem]{Corollary}
\newtheorem{proposition}[theorem]{Proposition}
\theoremstyle{definition}
\newtheorem{remark}[theorem]{Remark}
\newtheorem*{remark*}{Remark}
\begin{document}

\sloppy

\title[Differentiability of convex functions]{A geometric approach to second-order differentiability of convex functions}

\author{Daniel Azagra}
\address{Departamento de An{\'a}lisis Matem{\'a}tico y Matem\'atica Aplicada,
Facultad Ciencias Matem{\'a}ticas, Universidad Complutense, 28040, Madrid, Spain.  
}
\email{azagra@mat.ucm.es}

\author{Anthony Cappello}
\address{Department of Mathematics,
University of Pittsburgh,
301 Thackeray Hall,
Pittsburgh, PA 15260, USA.}
\email{arc172@pitt.edu }

\author{Piotr Haj\l asz}
\address{Department of Mathematics,
University of Pittsburgh,
301 Thackeray Hall,
Pittsburgh, PA 15260, USA.}
\email{hajlasz@pitt.edu}
\thanks{P.H. was supported by NSF grant  DMS-2055171 and 
by Simons Foundation grant
917582.}

\keywords{Convex function, convex body, Alexandrov theorem, Lusin property, Lipschitz gradient}

\subjclass[2020]{26B25, 28A75, 41A30, 52A20, 52A27, 53C45}

\begin{abstract}
We show a new, elementary and geometric proof of the classical Alexandrov theorem about the second order differentiability of convex functions. We also show new proofs of recent results about Lusin approximation of convex functions and convex bodies by $C^{1,1}$ convex functions and convex bodies. 
\end{abstract}

\maketitle

\section{Introduction}
\label{S1}
The aim of this paper is to provide a new, elementary and geometric proof of the following classical theorem of Alexandrov
\cite{Alexandroff}.
For a history of the theorem and a list of known proofs, see~\cite{colesanti}.
\begin{theorem}
\label{T7}
If $f:\R^n\to\R$ is convex, then it is differentiable a.e. and at almost every point where $f$ is differentiable, there is a symmetric matrix denoted by $D^2f(x)$ such that
\begin{equation}
\label{eq3}
\lim_{y\to x} 
\frac{f(y)-f(x)-Df(x)(y-x)-\frac{1}{2}(y-x)^T D^2f(x)(y-x)}{|y-x|^2}=0.
\end{equation}
\end{theorem}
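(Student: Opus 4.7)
The plan is to reduce Alexandrov's theorem to the $C^{1,1}$ case---where second-order differentiability a.e.\ follows directly from Rademacher's theorem applied to the Lipschitz gradient---through a paraboloid-touching decomposition of $\R^n$ combined with a Lusin $C^{1,1}$ approximation of convex functions. For each $\lambda>0$, set
\[
E_\lambda = \bigl\{x\in\R^n : \exists\,p\in\R^n,\ f(y)\le f(x)+p\cdot(y-x)+\tfrac{\lambda}{2}|y-x|^2 \text{ for all } y\in\R^n\bigr\},
\]
so that $x\in E_\lambda$ precisely when a paraboloid of opening $\lambda$ rests above the graph of $f$ at $(x,f(x))$.

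The crux---the step that makes the approach \emph{geometric}---is the measure-theoretic claim
\[
\Bigl|\R^n\setminus \bigcup_{\lambda\in\N} E_\lambda\Bigr| = 0,
\]
i.e.\ that almost every boundary point of the epigraph of $f$ admits such a circumscribed paraboloid. I would try to establish this by a direct covering argument on the convex hypersurface $\partial\{(y,t)\in\R^{n+1}:t\ge f(y)\}$. At a ``bad'' point no paraboloid of any finite opening fits above, which forces the exterior normal cone of the epigraph to have nontrivial vertical extent; a Fubini-type comparison of the total normal-cone mass (finite on bounded regions, as the image of the Gauss map) against the Lebesgue measure on $\R^n$ then forces the bad base points to form a null set.

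With the decomposition in hand, convexity of $f$ combined with the paraboloid bound at $x_1,x_2\in E_\lambda$ and their associated subgradients $p_1,p_2$ yields both the two-sided monotone estimate $0\le (p_1-p_2)\cdot(x_1-x_2)\le \lambda|x_1-x_2|^2$ and the Whitney-type quadratic closeness $|f(x_1)-f(x_2)-p_2\cdot(x_1-x_2)|\le \tfrac{\lambda}{2}|x_1-x_2|^2$. Invoking the Lusin-type $C^{1,1}$ approximation of convex functions announced in the abstract, one produces a convex function $g_\lambda\in C^{1,1}(\R^n)$ with $g_\lambda\le f$ on $\R^n$ and $g_\lambda=f$, $\nabla g_\lambda=p$ on $E_\lambda$. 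Since $\nabla g_\lambda$ is Lipschitz, Rademacher's theorem gives the second-order Taylor expansion of $g_\lambda$ at almost every point.

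To transfer the expansion from $g_\lambda$ to $f$, fix $x_0\in E_\lambda$ which is both a Lebesgue density point of $E_\lambda$ and a point of twice differentiability of $g_\lambda$. For $y$ near $x_0$ choose $x\in E_\lambda$ with $|y-x|=\dist(y,E_\lambda)$; bounding $f(y)$ above via the paraboloid estimate at $x$ and $g_\lambda(y)$ below via the convexity of $g_\lambda$ at $x$ gives
\[
0 \le f(y)-g_\lambda(y)\le \tfrac{\lambda}{2}\,\dist(y,E_\lambda)^2.
\]
Density $1$ of $E_\lambda$ at $x_0$ forces $\dist(y,E_\lambda)=o(|y-x_0|)$, hence $f(y)-g_\lambda(y)=o(|y-x_0|^2)$, so the second-order expansion of $g_\lambda$ transfers to $f$ and yields \eqref{eq3} with $D^2 f(x_0):=D^2 g_\lambda(x_0)$. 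A countable union over $\lambda\in\N$ then completes the argument. The principal obstacle is the covering argument in the second paragraph; the rest is a clean application of Rademacher's theorem together with the Lusin-$C^{1,1}$ approximation.
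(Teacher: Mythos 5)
Your final paragraph — comparing $f$ with $g_\lambda$ at a density point of $\{f=g_\lambda\}$ where $g_\lambda$ is twice differentiable, and bounding $|f-g_\lambda|=O(\dist(\cdot,E_\lambda)^2)=o(|\cdot-x_0|^2)$ — is sound and is essentially the mechanism of Lemma~\ref{T11}. The two earlier steps, however, do not go through as sketched. The claim $|\R^n\setminus\bigcup_{\lambda}E_\lambda|=0$ is the heart of the matter, and the normal-cone/Fubini argument you propose for it is not correct. For a finite convex $f:\R^n\to\R$ the outer normal cone of the epigraph at $(x,f(x))$ is $\{t(p,-1):t\ge 0,\,p\in\partial f(x)\}$; it never contains an upward vertical direction, so it has no ``vertical extent'' regardless of whether a circumscribed paraboloid exists. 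Concretely, $f(x)=|x|^{3/2}$ is differentiable at $0$ with normal cone equal to the single ray through $(0,-1)$, yet $0\notin E_\lambda$ for any $\lambda$: the set of bad base points is invisible to the Gauss map, and a comparison of normal-cone mass against Lebesgue measure cannot see it. The paper's replacement for this step is Lemma~\ref{T6} (McMullen's lemma), that almost every boundary point of a convex body has an inscribed ball touching there, proved by the dilation estimate $\HH^{n-1}(\partial K)\le\lambda(r)^{n-1}\HH^{n-1}(\partial K\cap\partial K(r))$; this is precisely the geometric content you flag as your ``principal obstacle,'' and leaving it open is leaving the theorem unproved.

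The other gap is the production of $g_\lambda$. Corollary~\ref{T14} (and Theorem~\ref{T2}) produces a convex $g\ge f$ in $C^{1,1}$ agreeing with $f$ on a set of measure $>|A|-\eps$, with no control over which set and no prescription of gradients; it does not give you a $g_\lambda\le f$ with $g_\lambda=f$ and $\nabla g_\lambda=p$ on all of $E_\lambda$. To get that you would need a Whitney-type $C^{1,1}$ extension theorem for convex functions as in \cite{AGM,AM}, which is exactly the machinery the paper's new argument is designed to avoid. The paper sidesteps both issues at once: it takes $g\ge f$ directly from Corollary~\ref{T14}, where the $C^{1,1}$ regularity is geometrically evident because the approximant is a union of balls of fixed radius (Lemma~\ref{T3}), and observes in Lemma~\ref{T19} that at any contact point $f\le g$ together with convexity of $f$ already forces $Df=Dg$, so no gradient prescription is needed. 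Thus the paper works with a single $g$ covering a set of measure $>|B^n(0,R)|-\eps$ rather than the graded family $E_\lambda$, and its measure claim is carried entirely by Lemma~\ref{T6}.
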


Our proof is so simple and geometric in nature that its concept can be described in just a few sentences.
All notation used in the Introduction will be explained in Section~\ref{S2}.

Consider the set $W(\delta)$, the union of all closed balls of radius $\delta>0$ contained in the epigraph of $f$. The set $W(\delta)$ is convex and it is the epigraph of a convex function $g$. Clearly, $g\geq f$. 
Using elementary and geometric arguments we show that if $\delta>0$ is sufficiently small, then the set $W(\delta)$ touches the graph of $f$ along a set that is large in the sense of measure. More precisely,
for every $R>0$ and every $\eps>0$, there is $\delta>0$ such that
$$
|\{x\in B^n(0,R):\, f(x)\neq g(x)\}|<\eps.
$$
Since the convex set $W(\delta)$ is the union of balls of fixed radius, it is well known and easy to prove that the boundary of $W(\delta)$ is of class $C^{1,1}_{\rm loc}$, and hence $g\in C^{1,1}_{\rm loc}$, i.e., the gradient of $g$ is locally Lipschitz continuous. 
Since $g\in C^{1,1}_{\rm loc}$, it follows from the Rademacher theorem that $g$ is twice differentiable almost everywhere in the classical sense. Now it remains to observe that at almost all points $x$ such that $f(x)=g(x)$, $f$ is twice differentiable in the sense of \eqref{eq3}. Namely, this is true whenever $x$ is a density point of the set $\{f=g\}$ and $g$ is twice differentiable at $x$. In that case \eqref{eq3} is satisfied with $D^2f(x):= D^2 g(x)$.

There is also a second version of the Alexandrov theorem which says that the subdifferential $\partial f$ is differentiable a.e. 
\begin{theorem}
\label{T8}
If $f:\R^n\to\R$ is convex, then for all $x\in\R^n$ where $f$ is twice differentiable as in \eqref{eq3}, we have 
\begin{equation}
\label{eq13}
\lim_{y\to x}\sup_{\sigma_y\in\partial f(y)}\frac{|\sigma_y-Df(x)-D^2f(x)(y-x)|}{|y-x|}=0.
\end{equation}
\end{theorem}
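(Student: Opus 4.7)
The plan is to derive \eqref{eq13} directly from \eqref{eq3} and the subgradient inequality, by testing the latter at carefully scaled perturbations of $y$. Fix $x$ where \eqref{eq3} holds, let
\[
P(z) := f(x) + Df(x)(z-x) + \tfrac12 (z-x)^T D^2 f(x)(z-x)
\]
be the second-order Taylor polynomial, and rephrase \eqref{eq3} as: for each $\varepsilon>0$ there exists $\rho=\rho(\varepsilon)>0$ with $|f(z)-P(z)|\leq \varepsilon|z-x|^2$ whenever $|z-x|<\rho$. Given $y$ near $x$ and $\sigma_y\in\partial f(y)$, write $u:=y-x$ and
\[
w := \sigma_y - Df(x) - D^2 f(x)\,u,
\]
so \eqref{eq13} becomes the assertion $|w|=o(|u|)$, uniformly in $\sigma_y$.

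For any $v$ with $|u+v|<\rho$, applying the subgradient inequality at $z=y+v$ together with the two bounds $|f(z)-P(z)|\leq \varepsilon|u+v|^2$ and $|f(y)-P(y)|\leq \varepsilon|u|^2$, and using the identity
\[
P(z) - P(y) - \sigma_y\cdot v \;=\; -w\cdot v + \tfrac12 v^T D^2 f(x) v,
\]
which follows from a short computation with $z-x=u+v$, yields the key inequality
\[
w\cdot v \;\leq\; \tfrac12 v^T D^2 f(x)\,v + \varepsilon\bigl(|u|^2 + |u+v|^2\bigr).
\]
I then specialize to $v = t\,w/|w|$ (assuming $w\neq 0$) with $t>0$, and with $C:=\|D^2 f(x)\|$ this turns into $t|w|\leq \tfrac{C}{2}t^2 + 3\varepsilon|u|^2 + 2\varepsilon t^2$. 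Setting $t = \eta|u|$ for a small parameter $\eta>0$ (which also guarantees $|u+v|<\rho$ as soon as $(1+\eta)|u|<\rho$) and dividing by $t$ produces
\[
\frac{|w|}{|u|} \;\leq\; \frac{\eta C}{2} + \frac{3\varepsilon}{\eta} + 2\varepsilon\eta.
\]
Given $\kappa>0$, I pick $\eta$ so that $\eta C/2<\kappa/2$, then $\varepsilon$ small enough (depending on $\eta$) that the remaining two terms sum to less than $\kappa/2$, and finally restrict to $|u|<\rho(\varepsilon)/(1+\eta)$. Since $\sigma_y$ enters the estimate only through $w$, the resulting bound $|w|/|u|<\kappa$ is uniform in $\sigma_y$, which is \eqref{eq13}.

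The main subtlety is the order of quantifiers in this last step: because the quadratic form $v^T D^2 f(x)v$ does not vanish with $|u|$, the naive choice $|v|\sim|u|$ delivers only $|w|=O(|u|)$ rather than $o(|u|)$. The remedy is to take $|v|=\eta|u|$ with $\eta$ small but fixed \emph{before} $\varepsilon$, and $\varepsilon$ in turn fixed before $|u|$, so that the two competing error terms $\eta C/2$ and $3\varepsilon/\eta$ can be driven separately below any prescribed tolerance.
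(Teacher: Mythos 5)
Your proof is correct and follows essentially the same strategy as the paper's: both test the subgradient inequality at a perturbed point $y+v$ with $|v|$ small compared to $|y-x|$, expand via the second-order Taylor estimate \eqref{eq3}, and balance the quadratic term $\tfrac12 v^T D^2 f(x)v$ against the Taylor remainder to force $|w|=o(|u|)$. The only difference is cosmetic: you balance by fixing the ratio $\eta=|v|/|u|$ first, then $\varepsilon$, then shrinking $|u|$, whereas the paper makes the single optimized choice $|v|=\sqrt{a(|u|)}\,|u|$ (with $a$ an explicit modulus for the remainder) and obtains the closed-form bound $(\|D^2f(x)\|+5)\sqrt{a(|u|)}\to 0$ directly.
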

The usual way to prove Theorem~\ref{T7} is to show Theorem~\ref{T8} first and conclude Theorem~\ref{T7} from it. 
In our approach we will prove Theorem~\ref{T7} directly and we will conclude Theorem~\ref{T8} as a corollary.

The argument described above also leads to new and elementary proofs of the following recent results \cite[Theorem~1.4, Corollary~1.7, Theorem~1.12 and Corollary~1.13]{AzagraH}.
\begin{theorem}
\label{T2}
Let $f:\R^n\to\R$ be a convex function. Then, for every measurable set $A\subset\R^n$ of finite Lebesgue measure, and for every $\varepsilon>0$, there exists a convex function $g\in C^{1,1}(\R^n)$ such that
$$
|\{x\in A:\, f(x)\neq g(x)\}|<\varepsilon.
$$
\end{theorem}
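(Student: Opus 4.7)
The plan is to combine the $W(\delta)$ construction described in the introduction with a reduction to a large ball and a convex $C^{1,1}$-extension step.

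Since $A$ has finite Lebesgue measure, I would first choose $R>0$ so large that $|A\setminus B^n(0,R)|<\eps/2$. It then suffices to find a convex $g\in C^{1,1}(\R^n)$ with
$$|\{x\in B^n(0,R):\, f(x)\neq g(x)\}|<\eps/2.$$
Next, I would take $\delta>0$ sufficiently small and let $g_\delta$ be the convex function whose epigraph is the set $W(\delta)$ described in the introduction. Then $g_\delta\geq f$, $g_\delta\in C^{1,1}_{\mathrm{loc}}(\R^n)$, and, by the measure estimate behind Theorem~\ref{T7} applied with $R$ replaced by $R+1$,
$$|\{x\in B^n(0,R+1):\, f(x)\neq g_\delta(x)\}|<\eps/2,$$
provided $\delta$ is small enough. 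In particular $g_\delta$ is of class $C^{1,1}$ on the closed ball $\overline{B^n(0,R+1)}$.

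The remaining main step is to produce a function $g\in C^{1,1}(\R^n)$ that is convex and coincides with $g_\delta$ on $B^n(0,R)$. I would choose a convex $C^{1,1}$ paraboloid $P(x)=\alpha|x|^2+\langle v,x\rangle+c$ with $\alpha$ large enough that $P\geq g_\delta$ outside $B^n(0,R+1)$ while $P\leq g_\delta$ on $B^n(0,R)$, and then glue $g_\delta$ (interior piece) to $P$ (exterior piece) across the annulus $B^n(0,R+1)\setminus \overline{B^n(0,R)}$ via a $C^{1,1}$ convex interpolation. This could be done either by an explicit smoothed maximum of $g_\delta$ and $P$ tailored to the annulus, or by appealing to a Whitney-type extension theorem for convex $C^{1,1}$ data defined on a convex compact set. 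The resulting $g$ is convex, lies in $C^{1,1}(\R^n)$, and coincides with $g_\delta$ on $B^n(0,R)$, hence satisfies the required measure estimate.

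The main obstacle is precisely this last gluing. The naive pointwise maximum $\max(g_\delta,P)$ is convex and does equal $g_\delta$ on $B^n(0,R)$, but is in general only continuous at the crossing set and therefore fails to be $C^1$. Standard mollification restores smoothness but destroys the exact equality $g=g_\delta$ on $B^n(0,R)$. Overcoming these obstructions is the technical heart of the argument: one must either invoke a pre-existing convex $C^{1,1}$-extension theorem or carry out an explicit construction in the annulus that simultaneously preserves convexity and the Lipschitz bound on the gradient, while leaving the interior values of $g_\delta$ unchanged.
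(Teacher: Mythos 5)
Your proposal follows the same overall strategy as the paper: truncate $A$ to a ball $B^n(0,R)$ using $|A|<\infty$, approximate $f$ on a slightly larger ball by a convex $C^{1,1}$ function $g_\delta$ coming from the $W(\delta)$ construction (this is exactly Corollary~\ref{T14}), and then extend $g_\delta$ from $B^n(0,R)$ to a convex $C^{1,1}$ function on all of $\R^n$ by gluing with a paraboloid. So the decomposition is correct, and you have also correctly diagnosed exactly where the difficulty lies.

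The gap is that you stop at the gluing step: you name it as ``the technical heart of the argument'' and suggest two possible resolutions (a smoothed maximum tailored to the annulus, or an appeal to a Whitney-type convex $C^{1,1}$ extension theorem), but you do not carry either out. The second option defeats the purpose of the paper, which is explicitly to avoid the Whitney extension machinery for convex functions. The first option is the one the paper actually executes (Proposition~\ref{T17}): define the \emph{smooth maximum} $\MM(x,y)=\tfrac{1}{2}\bigl(x+y+\theta(x-y)\bigr)$, where $\theta\in C^\infty(\R)$ is convex, even, $1$-Lipschitz, and agrees with $|t|$ exactly when $|t|\ge 1$. The key point you are missing is how to make the gluing \emph{exact} on $B^n(0,R)$ rather than merely approximate: choose $\rho\in(R,R+1)$ and $a,b>0$ so that the paraboloid $q(x)=a|x|^2-b$ satisfies $q<g_\delta-1$ on $\overbar{B}^n(0,R)$ and $q>g_\delta+1$ on a neighborhood of $\{|x|=\rho\}$. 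Because $\MM$ coincides with the ordinary maximum whenever $|x-y|\ge 1$, the function $\MM(g_\delta,q)$ is exactly $g_\delta$ on $\overbar{B}^n(0,R)$ and exactly $q$ near $|x|=\rho$, so it patches smoothly with $q$ outside $B^n(0,\rho)$ to give a convex function $g\in C^{1,1}(\R^n)$; convexity of $\MM(g_\delta,q)$ follows since $\MM$ is convex and nondecreasing in each argument. This ``one-sided margin of size $1$'' device is what removes the tension you correctly identified between smoothing and preserving the interior values; without it, your proposal does not yet prove the theorem.

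A smaller point: you take $g_\delta$ to be the function whose epigraph is the $W(\delta)$ of the full (unbounded) epigraph, and you assert $g_\delta\in C^{1,1}_{\rm loc}(\R^n)$. The paper's Lemma~\ref{T3} is stated and proved for compact convex bodies, so one either has to check that the second proof extends verbatim to the unbounded case (it does, since compactness is not used essentially), or, as the paper does in Corollary~\ref{T14}, work with a bounded truncation of the epigraph in $\overbar{B}^n(0,2R)\times\R$, which keeps everything within the framework of convex bodies as already established. This is a minor stylistic difference but worth noting if you want the argument to rest only on the lemmas as actually proved.
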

\begin{theorem}
\label{T1}
Let $K$ be a convex body in $\R^n$. Then
for every $\eps>0$, there is a convex body $W\subset K$, such that $\partial W\in C^{1,1}$ and 
$$
\HH^{n-1}(\partial K\triangle\partial W)<\eps.
$$
In fact, there is $\delta_o>0$ such that for every $\delta\in (0,\delta_o)$, the set $W$ defined as the union of all closed balls of radius $\delta$ that are contained in $K$ satisfies the claim of the theorem.
\end{theorem}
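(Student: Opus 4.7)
My plan is to use the concrete construction suggested in the statement: for $\delta>0$ let
\[
W=W(\delta):=\bigcup\{\overline B(y,\delta): \overline B(y,\delta)\subset K\},
\]
equivalently, the Minkowski opening $W(\delta)=(K\ominus\delta\overline B(0,1))\oplus\delta\overline B(0,1)$. I would verify in order that (a) $W(\delta)$ is a convex body contained in $K$, (b) $\partial W(\delta)\in C^{1,1}$, and (c) $\HH^{n-1}(\partial K\triangle\partial W(\delta))<\eps$ provided $\delta$ is small enough.

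Parts (a) and (b) are essentially formal. Convexity of the erosion $K\ominus\delta\overline B(0,1)$ is a direct check on midpoints of admissible centers, and the Minkowski sum of convex sets is convex; for $\delta$ below the inradius of $K$, $W(\delta)$ has nonempty interior and is therefore a convex body contained in $K$. The $C^{1,1}$ regularity follows from the interior rolling-ball condition built into the definition: every $p\in\partial W(\delta)$ lies on some $\overline B(y,\delta)\subset W(\delta)$. Writing $\partial W(\delta)$ locally near $p$ as the graph of a convex function in the tangent direction at $p$, this function is sandwiched between the supporting hyperplane at $p$ (from outside) and a spherical cap of radius $\delta$ (from inside), which gives the two-sided quadratic control characterizing $C^{1,1}$.

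For (c), I would reduce to the convex-function picture already used in the proof sketch of Theorem~\ref{T7}. Since $\partial K$ is compact, cover it by finitely many open boxes $U_1,\ldots,U_N\subset\R^n$ such that, after a rotation, $\partial K\cap U_i$ is the graph of a convex Lipschitz function $f_i:V_i\to\R$ over some $V_i\subset\R^{n-1}$, and $K\cap U_i$ is its epigraph within $U_i$. Let $g_i$ be the convex function whose epigraph is the union of all closed balls of radius $\delta$ contained in $\mathrm{epi}(f_i)$. The geometric estimate that drives the proof of Theorem~\ref{T7} (described in the Introduction) gives, for each $\eps'>0$, a threshold $\delta_i>0$ with $|\{x\in V_i:f_i(x)\neq g_i(x)\}|<\eps'$ whenever $\delta<\delta_i$. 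Both $f_i$ and $g_i$ are Lipschitz on $V_i$ with a common bound $L$ (by convexity and boundedness of slopes of the supporting hyperplanes of $\mathrm{epi}(f_i)$, which also bound those of $\mathrm{epi}(g_i)\subset\mathrm{epi}(f_i)$), so the area formula controls the $(n-1)$-Hausdorff measure of the portion of $\mathrm{graph}(f_i)\cup\mathrm{graph}(g_i)$ lying over $\{f_i\neq g_i\}$ by $2\sqrt{1+L^2}\,\eps'$.

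The main obstacle is the local-to-global identification of the rolling-ball construction: a ball $\overline B(y,\delta)\subset K$ tangent to $\partial K$ at a point of $U_i$ must also be contained in the local epigraph $\mathrm{epi}(f_i)$, and conversely. This holds once $\delta$ is small compared to the diameters of the $U_i$ and to the angles between adjacent tangent planes in the cover, so that no such tangent ball can escape the coordinate cylinder over $V_i$. Granting this, $(\partial K\triangle\partial W(\delta))\cap U_i$ sits inside the union of the two graphs over $\{f_i\neq g_i\}$; summing over the $N$ charts and choosing $\eps'$ so that $2N\sqrt{1+L^2}\,\eps'<\eps$ finishes the proof. The constant $\delta_o$ is then $\min_i\delta_i$ together with the size constraints arising from the local-global identification.
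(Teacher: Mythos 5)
The construction and parts (a), (b) of your plan match the paper's. The problem is in part (c), and it is not a minor detail: the single sentence ``The geometric estimate that drives the proof of Theorem~\ref{T7} (described in the Introduction) gives, for each $\eps'>0$, a threshold $\delta_i>0$ with $|\{x\in V_i: f_i(x)\neq g_i(x)\}|<\eps'$'' is circular. That estimate is not proved in the Introduction; it is only announced there, and its actual proof in the paper runs through Theorem~\ref{T1} itself (via Lemma~\ref{T6} and then Corollary~\ref{T14}). So you are invoking, as a black box, a statement logically equivalent to the one you are asked to prove. The heart of the matter --- showing that the set of boundary points of a convex body that are touched from inside by a ball of radius $\delta$ has $\HH^{n-1}$-measure tending to the full boundary measure as $\delta\to 0^+$ --- is simply absent from your argument. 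The paper proves this (Lemma~\ref{T6}, due to McMullen) by a clean global dilation argument: take the smallest $\lambda(r)\geq 1$ with $K\subset\lambda(r)K_r$, observe $\lambda(r)\to 1$, project $\partial(\lambda(r)K_r)$ onto $\partial K$ by the $1$-Lipschitz nearest-point map, and sandwich
$\HH^{n-1}(\partial K)\leq\lambda(r)^{n-1}\HH^{n-1}(\partial K\cap\partial K(r))\leq\lambda(r)^{n-1}\HH^{n-1}(\partial K)$.
Nothing in your chart-based reduction supplies a substitute for this.

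There is also a secondary soundness issue you flag but do not resolve: the local rolling-ball sets defined from $\operatorname{epi}(f_i)$ are not the same as $W(\delta)\cap U_i$. A ball contained in $\operatorname{epi}(f_i)$ need not lie in $K$ (the local epigraph over the bounded patch $V_i$ protrudes outside $K$ near $\partial V_i$), and conversely a tangent ball at a point of $\partial K\cap U_i$ may exit $U_i$. Making the local and global constructions coincide, even after shrinking $\delta$, requires a careful buffer argument and cannot be waved away by ``$\delta$ small compared to the diameters and angles.'' The paper avoids both difficulties entirely by working globally with $K$, $K_r$, $K(r)$, and the nearest-point projection (Lemma~\ref{T15}), which makes the estimate $\HH^{n-1}(\partial K(\delta)\setminus\partial K)\leq\HH^{n-1}(\partial K\setminus\partial K(\delta))$ one line and requires no atlas of graph charts at all.
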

A {\em convex body} is a compact convex set $K\subset\R^n$ with non-empty interior.
Notation $A\triangle B$ stands for the symmetric difference of the sets $A$ and $B$, that is, 
$
A\triangle B :=(A\setminus B)\cup (B\setminus A),
$
and $\HH^s$ denotes the $s$-dimensional Hausdorff measure.
\begin{theorem}
\label{global theorem}
Let $f:\R^n\to\R$ be a convex function, and assume that $f\not\in C^{1,1}_{{\rm loc}}(\R^n)$. Then the following conditions are equivalent:
\begin{enumerate}
\item For every $\eps>0$ there exists a convex function $g\in C^{1,1}_{\rm loc}(\R^n)$ such that 
$$
|\{x\in \R^n : f(x)\neq g(x)\}|<\varepsilon.
$$ 
\item The function $f$ is essentially coercive. 
\end{enumerate}
Moreover, if  $f$ is essentially coercive, we can find $g$ satisfying $g\geq f$.
\end{theorem}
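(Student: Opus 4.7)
My plan is to prove $(1) \Rightarrow (2)$ by contrapositive and $(2) \Rightarrow (1)$ by a refinement of the rolling-ball construction from the introduction.

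For $(1) \Rightarrow (2)$: if $f$ is not essentially coercive, then the recession function $f_\infty$ has a nontrivial lineality subspace $V\ne\{0\}$, and a standard convex-analytic decomposition yields
\[
f(v+w)=L(v+w)+h(w),\qquad v\in V,\ w\in V^{\perp},
\]
for some affine function $L$ and convex $h:V^{\perp}\to\R$. Given any convex $g\in C^{1,1}_{\loc}$ with $|\{g\ne f\}|<\eps$, Fubini shows that for a.e.\ $w$ the slice $\{v\in V:g(v+w)\ne f(v+w)\}$ has finite measure, so the function $\phi_w(v):=g(v+w)-L(v+w)-h(w)$ is convex on $V$ and vanishes off a set of finite measure. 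The key lemma is that a convex function on $\R$ vanishing on the complement of a finite-measure set must be identically zero: if $F(t_0)>0$, choosing zeros $a<t_0<b$ (which must exist because the complement of the zero set has finite measure) and applying $F(t_0)\le\alpha F(a)+(1-\alpha)F(b)=0$ gives a contradiction, while the case $F(t_0)<0$ is handled by propagating the resulting inequality outward to force $F>0$ on all of a half-line, again contradicting the finite-measure complement. Slicing $V$ by lines then gives $\phi_w\equiv 0$, hence $g=f$ a.e.\ and therefore everywhere by continuity, contradicting $f\notin C^{1,1}_{\loc}$.

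For $(2)\Rightarrow(1)$ together with the ``moreover'' clause: replacing $f$ by $f-L$ for a supporting affine $L$, I may assume $f$ is coercive, and will add $L$ back to $g$ at the end. I use a position-dependent version of the set $W(\delta)$: rather than a fixed radius $\delta$, I allow balls $\overline{B}(p,r)\subset\mathrm{epi}(f)$ with $r$ bounded above by a function of $p$ that decays to $0$ as $f(p)\to\infty$. The resulting convex body is again an epigraph $\mathrm{epi}(g)$ with $g\ge f$; the local geometric analysis from the introduction applies at each scale to give $g\in C^{1,1}_{\loc}$, and $g$ touches the graph of $f$ wherever $f$ is sufficiently regular at the appropriate local scale. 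Coercivity is used to choose the radius schedule so that the defect $\{g>f\}$, which is a neighborhood of the singular set of $f$ whose thickness shrinks at infinity, has total Lebesgue measure less than $\eps$.

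\emph{Main obstacle.} The principal difficulty is controlling $|\{g>f\}|$ globally. The uniform-radius construction used for Theorem~\ref{T2} fails on the coercive example $f(x_1,x_2)=|x_1|+x_2^2$: a ball of fixed radius cannot touch the ridge $\{x_1=0\}$, so $\{g_\delta>f\}\supset\{|x_1|<c\delta\}$ has infinite measure. Essential coercivity must therefore be used quantitatively to let the rolling radius tend to $0$ at infinity, and the chief technical challenge is to carry this out while preserving both convexity of $g$ and the local Lipschitz estimate for $\nabla g$.
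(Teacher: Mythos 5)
Your implication $(1)\Rightarrow(2)$ is a reasonable self-contained argument. The paper simply cites \cite[Proposition 1.10 and Theorem 2.5]{AzagraH} for that direction, so a direct slicing/Fubini proof as you sketch is a legitimate alternative. Your one-dimensional lemma does hold, but the argument as written needs tightening: you must use zeros on \emph{both} sides of $t_0$ and then push the slope outward past the farther zero (if $F(a)=F(b)=0$ with $a<t_0<b$ and $F(t_0)<0$, the monotonicity of slopes forces $F(t)>0$ for every $t>b$, contradicting finite measure of $\{F\ne0\}$); the version with a single zero $a<t_0$ does not yield the positivity you want.

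The real problem is in $(2)\Rightarrow(1)$. Your plan is to take a union of balls of position-dependent radius $r(p)$ (decaying as $f(p)\to\infty$) contained in $\operatorname{epi}(f)$ and declare it an epigraph $\operatorname{epi}(g)$. Two things are not established, and at least the first is a genuine gap, not just a loose end. First, the set $\bigcup\{\overbar{B}(p,r): r\le\rho(p),\ \overbar{B}(p,r)\subset\operatorname{epi}(f)\}$ is \emph{not} obviously convex when $\rho$ is nonconstant: the convex hull of two balls $\overbar B(p_1,r_1)$ and $\overbar B(p_2,r_2)$ is covered by interpolating balls of radius $(1-t)r_1+tr_2$, and for these to remain admissible you would need something like concavity of $\rho$ along segments, which is in tension with the requirement that $\rho$ decay at infinity for a coercive convex $f$ (and composing a decreasing function with a convex $f$ has no definite convexity). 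Since the whole argument (Lemma~\ref{T3}, Lemma~\ref{T19}, Lemma~\ref{T11}) leans on $\operatorname{epi}(g)$ being convex, this has to be fixed, not waved away. Second, even granting convexity, the $C^{1,1}_{\rm loc}$ estimate for variable radius needs a separate proof (Lemma~\ref{T3} is for a fixed radius); locally one expects a Lipschitz bound of order $1/\inf\rho$, but you do not prove it. You correctly identify the measure estimate as a further obstacle and leave it open as well, so the proposal is really only a heuristic for this direction. The paper sidesteps all of this by a quite different device: it applies Corollary~\ref{T14} on the balls $B^n(0,2k)$ to get convex $g_k\in C^{1,1}$ with $f\le g_k$ and defect $<\eps/2^k$, then forms $\varphi=\inf_k\bigl(g_k+\theta_k(|\cdot|)\bigr)$ with barrier penalties $\theta_k$ blowing up at the edges of overlapping annuli, verifies the one-sided second-difference bound $\varphi(x+h)+\varphi(x-h)-2\varphi(x)\le C_R|h|^2$ locally, and then takes $g=\operatorname{conv}(\varphi)$, invoking Lemma~\ref{regularity of convex envelope} (Kirchheim--Kristensen) to conclude $g\in C^{1,1}_{\loc}$. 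That construction automatically produces a globally defined convex function, inherits $g\ge f$ and the defect bound because $f\le g\le\varphi\le g_k$ on each annulus, and avoids both of the issues your geometric construction runs into.
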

We call a convex function $f:\R^n\to\R$  {\em essentially coercive} if there exists a linear function $\ell:\R^n\to\R$ such that 
$\lim_{|x|\to\infty}\left( f(x)-\ell(x)\right)=\infty$; this is equivalent to saying that the epigraph of $f$ does not contain lines, see \cite[Theorem~1.11]{AM2}. Here and in what follows by a line we mean a set isometric to $\R$ so half-line is not a line.
\begin{theorem}
\label{corollary for convex hypersurfaces}
Let $S$ be a convex hypersurface of $\R^n$, and assume that $S$ is not of class $C^{1,1}_{{\rm loc}}$. Then the following assertions are equivalent:
\begin{enumerate}
\item For every $\varepsilon>0$ there exists a convex hypersurface $S_{\varepsilon}$  of $\R^n$ of class $C^{1,1}_{\rm loc}$  such that
$\mathcal{H}^{n-1}\left(S\triangle S_{\varepsilon}\right)< \varepsilon$.
\item $S$ does not contain any line.
\end{enumerate}
\end{theorem}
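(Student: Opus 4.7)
Let $K$ denote the closed convex set in $\R^n$ with nonempty interior and $\partial K=S$. By a standard convexity argument, $S$ contains a line if and only if $K$ does: if $L\subset\partial K$ is a line and $p\in\mathrm{int}\,K$, the line through $p$ parallel to $L$ is a limit of convex combinations $\lambda x+(1-\lambda)p$ with $x\in L$, so it lies in $K$, and $K$ is a cylinder with axis parallel to $L$. Both implications are proved with the same $W(\delta)$-construction introduced in Theorem~\ref{T1}.

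\emph{For $(1)\Rightarrow(2)$}, I argue the contrapositive. Suppose $S$ contains a line parallel to a unit vector $v$; then $K=K_0+\R v$ where $K_0\subset v^{\perp}$ is a convex body of $\R^{n-1}$ with nonempty interior (so $\HH^{n-2}(\partial K_0)>0$), and $S=\partial K_0+\R v$. Assume, for contradiction, that a convex $C^{1,1}_{\mathrm{loc}}$ hypersurface $S_\eps=\partial K_\eps$ satisfies $\HH^{n-1}(S\triangle S_\eps)<\eps<\infty$. If $K_\eps$ is also a cylinder $K_{0,\eps}+\R v$, then Fubini along lines $\ell_{x_0}=x_0+\R v$ forces $\HH^{n-2}(\partial K_0\triangle\partial K_{0,\eps})=0$, which---since convex-body boundaries cannot differ from one another on an $\HH^{n-2}$-null subset of themselves---yields $K_0=K_{0,\eps}$, hence $\partial K_0\in C^{1,1}_{\mathrm{loc}}$, hence $S\in C^{1,1}_{\mathrm{loc}}$, contradicting the standing hypothesis. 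Otherwise $K_\eps$ contains no line parallel to $v$, so each $\ell_{x_0}$ meets $S_\eps$ in a bounded set while $\ell_{x_0}\subset S$ for every $x_0\in\partial K_0$; Fubini against $\HH^{n-2}(\partial K_0)>0$ then forces $\HH^{n-1}(S\triangle S_\eps)=+\infty$, again a contradiction.

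\emph{For $(2)\Rightarrow(1)$}, I take $S_\eps:=\partial W(\delta)$ with $W(\delta)=\bigcup\{B(x,\delta):B(x,\delta)\subset K\}$ and small $\delta>0$. The same elementary argument as in Theorem~\ref{T1}---which is purely local and does not use compactness of $K$---gives that $W(\delta)$ is convex, $W(\delta)\subset K$, and $\partial W(\delta)\in C^{1,1}_{\mathrm{loc}}$. It remains to prove
\[
\HH^{n-1}\bigl(S\triangle\partial W(\delta)\bigr)\xrightarrow[\delta\to 0]{}0.
\]
A point $x\in S$ belongs to $\partial W(\delta)$ iff the inscribed ball $B(x-\delta\nu(x),\delta)$ is contained in $K$ (here $\nu(x)$ is the outer unit normal, defined $\HH^{n-1}$-a.e.\ on $S$). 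Applying Alexandrov's theorem (Theorem~\ref{T7}) to local graph representations of $S$, at $\HH^{n-1}$-a.e.\ $x\in S$ the principal curvatures are finite, and convexity then gives $\delta_0(x)>0$ such that the inscribed ball fits in $K$ for every $\delta<\delta_0(x)$. Hence the bad set $E_\delta=\{x\in S:x\notin\partial W(\delta)\}$ shrinks pointwise to an $\HH^{n-1}$-null set as $\delta\to 0$, and a parallel estimate controls $\partial W(\delta)\setminus S$.

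\emph{The main obstacle} is that $\HH^{n-1}(S)$ may be infinite---as with $S=\mathrm{graph}(|x|)$ in $\R^2$---in which case the pointwise shrinkage of $E_\delta$ does not automatically yield $\HH^{n-1}(E_\delta)\to 0$. This is precisely where the no-line hypothesis must be invoked, and the cleanest way to exploit it is a reduction to Theorem~\ref{global theorem}: because $\mathrm{rec}(K)$ is pointed, after a suitable rotation one writes (the relevant part of) $K$ as the epigraph of an essentially coercive convex $f:\R^{n-1}\to\R$, using finitely many such ``charts'' when the recession cone is lower-dimensional and using Theorem~\ref{T1} directly on any compact portion. On each such chart, $W(\delta)$ coincides with the epigraph of a $C^{1,1}_{\mathrm{loc}}$ convex function $g_\delta\geq f$ with $|\{f\neq g_\delta\}|\to 0$ by Theorem~\ref{global theorem}. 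Upgrading this Lebesgue-smallness to $\HH^{n-1}$-smallness of the graphs amounts to controlling
$
\int_{\{f\neq g_\delta\}}\bigl(\sqrt{1+|\nabla f|^2}+\sqrt{1+|\nabla g_\delta|^2}\bigr)\,dx,
$
which one shows to be finite and to vanish in the limit using convexity of $f$ and $g_\delta$ together with the essential coercivity to bound the gradients on the relevant set. Summing the finitely many chart contributions gives $\HH^{n-1}(S\triangle\partial W(\delta))<\eps$ for $\delta$ sufficiently small, completing the proof with $S_\eps=\partial W(\delta)$.
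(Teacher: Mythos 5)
Your proposal is a genuinely different route from the paper, but as written it has a real gap in the $(2)\Rightarrow(1)$ direction, precisely at the point you identify as ``the main obstacle.''

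\textbf{On $(2)\Rightarrow(1)$.} The paper does not use $W(\delta)$ here; instead it invokes the Minkowski-functional construction from \cite[Corollary~1.13]{AzagraH} to produce a $C^{1,1}_{\rm loc}$ convex body $W_\eps\subset W$ with $\mathcal H^{n-1}(S\setminus S_\eps)<\eps/2$, and then proves the \emph{new} ingredient: $\pi_{W_\eps}(S)=S_\eps$, so that $\mathcal H^{n-1}(S_\eps\setminus S)\leq\mathcal H^{n-1}(S\setminus S_\eps)$ by Lemma~\ref{T15}. Showing $\pi_{W_\eps}(S)=S_\eps$ for \emph{unbounded} convex bodies is nontrivial (the bounded case was used in Theorem~\ref{T1} and is easy), and this is exactly where the no-line hypothesis enters: if the outward ray from some $x\in S_\eps$ missed $S$ entirely, the paper builds a half-cylinder $C_x\subset\operatorname{int}W$, observes that every line $L_{p,v}$ with $p$ on the ray and $v$ tangent to $T_x$ must cross $S$ (else $W$ contains a line), projects these crossing points radially onto $C_x$ (a $1$-Lipschitz map), and uses the antipodal isometry of the cylinder to force $\mathcal H^{n-1}(S\setminus S_\eps)=\infty$, a contradiction. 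In your sketch this entire side of the estimate is dispatched with ``a parallel estimate controls $\partial W(\delta)\setminus S$,'' but nothing parallel is available: the Alexandrov/curvature argument only controls $S\cap\partial W(\delta)$, not the new boundary pieces that $\partial W(\delta)$ acquires off of $S$.

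\textbf{On the integral control.} Your proposed upgrade from Lebesgue-smallness of $\{f\neq g_\delta\}$ to $\mathcal H^{n-1}$-smallness via
\[
\int_{\{f\neq g_\delta\}}\bigl(\sqrt{1+|\nabla f|^2}+\sqrt{1+|\nabla g_\delta|^2}\bigr)\,dx
\]
is not justified. Essential coercivity does not bound $|\nabla f|$ (consider $f(x)=e^{|x|}$), so there is no a priori reason this integral is finite, let alone that it vanishes as $\delta\to0$; the set $\{f\neq g_\delta\}$ is precisely where $f$ is irregular, hence where $|\nabla f|$ can concentrate. One would need at minimum a uniform integrability argument, which you do not supply. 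The ``finitely many charts'' decomposition of $K$ into epigraphs of essentially coercive functions is also asserted without construction, and it is not clear how to carry it out when $\operatorname{rec}(K)$ is lower-dimensional.

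\textbf{On $(1)\Rightarrow(2)$.} Your contrapositive Fubini argument is in the right spirit (the paper simply cites \cite{AzagraH} for this direction). The step ``convex-body boundaries cannot differ from one another on an $\mathcal H^{n-2}$-null subset of themselves'' needs a short justification, but the overall structure is sound.

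In summary, the $(2)\Rightarrow(1)$ direction is where the theorem's content lies, and your sketch leaves the two decisive steps---bounding $\mathcal H^{n-1}(S_\eps\setminus S)$ and upgrading area estimates past infinite total surface area---unproved. The paper resolves both in one stroke via the projection identity $\pi_{W_\eps}(S)=S_\eps$, and the no-line hypothesis is used in an essential, geometric way there rather than in the manner you indicate.
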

We call the boundary $\partial W$ of a closed convex set $W$ with nonempty interior (not necessarily bounded) a {\em convex hypersurface}, and we say that it is of class $C^{1,1}_{\rm loc}$ if it is locally a graph of a $C^{1,1}$ function (if the set $W$ is unbounded, we will say that $W$ is an {\em unbounded convex body}). 
\begin{remark}
\label{R2}
It follows from the proof that if $S=\partial W$, where $W$ is a (possibly unbounded) convex body that contains no lines, then there exists a (possibly unbounded) convex body $W_{\varepsilon}\subset W$ such that $S_\eps:=\partial W_\eps\in C^{1,1}_{\textrm{loc}}$ and $\mathcal{H}^{n-1}\left(S\triangle S_{\varepsilon}\right)< \varepsilon$.
\end{remark}

From Theorem~\ref{corollary for convex hypersurfaces} we will also deduce the following  new generalization of Theorem~\ref{global theorem} for convex functions defined on arbitrary open convex subsets of $\R^n$.

\begin{theorem}
\label{global theorem for arbitrary U}
Let $U\subset\R^n$ be open and convex, and let $f:U\to\R$ be a convex function, such that $f\not\in C^{1,1}_{\rm loc}(U)$. Then, the following statements are equivalent:
\begin{enumerate}
\item For every $\varepsilon>0$ there exists a convex function $g\in C^{1,1}_{\rm loc}(U)$  such that 
\begin{equation}
\label{eq23}
|\{x\in U : f(x)\neq g(x)\}|<\varepsilon.
\end{equation}
\item The graph of $f$ does not contain any line of $\R^{n+1}$.
\end{enumerate}
Moreover, if the graph of $f$ contains no lines, we can find $g$ satisfying $g\geq f$.
\end{theorem}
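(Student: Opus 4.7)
The plan is to deduce Theorem~\ref{global theorem for arbitrary U} from Theorem~\ref{corollary for convex hypersurfaces} and Remark~\ref{R2} applied in $\R^{n+1}$ to the closed convex set $W:=\overline{\operatorname{epi}(f)}$, which is a (possibly unbounded) convex body. The key preliminary observation is that the graph of $f$ contains a line of $\R^{n+1}$ if and only if $W$ contains a line. The nontrivial direction excludes vertical lines first: since any convex function on an open convex set is bounded below by an affine function, $\liminf_{x\to x_0,\,x\in U} f(x)>-\infty$ for every $x_0\in\overline U$, which prevents $\{x_0\}\times\R$ from lying in $W$. Hence every line $L\subset W$ has nonzero horizontal direction $v$; its projection $\pi(L)$ is a line in $\overline U$, and since $U$ is open and convex, $U$ must contain a parallel line through each of its points. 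A standard lineality argument (essentially the recession-cone analysis already used in this paper) then shows that $f$ is affine along each such parallel line with a fixed slope $a$, realising $L$ as a line in the graph.

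For (2)$\Rightarrow$(1), if the graph of $f$ has no lines, then $W$ has no lines. Remark~\ref{R2} (applied in $\R^{n+1}$) provides, for any $\varepsilon'>0$, a convex body $W_{\varepsilon'}\subset W$ with $\partial W_{\varepsilon'}\in C^{1,1}_{\rm loc}$ and $\HH^n(\partial W\triangle \partial W_{\varepsilon'})<\varepsilon'$. Because $W$ is upward-closed and the approximant is a union of closed balls contained in $W$, $W_{\varepsilon'}$ is itself upward-closed and hence is the epigraph of a convex function $g\geq f$ on its vertical projection. Since the vertical projection $\pi:\R^{n+1}\to\R^n$ is $1$-Lipschitz, $|\pi(A)|\leq\HH^n(A)$ for every Borel $A$, so once $\pi(W_{\varepsilon'})\supset U$ the estimate $|\{x\in U: f(x)\neq g(x)\}|<\varepsilon'$ is immediate: for any such $x$ the point $(x,f(x))$ lies in $\partial W\setminus\partial W_{\varepsilon'}$. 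To arrange that $\pi(W_{\varepsilon'})\supset U$, I would use a variable-radius version of the ball-union construction: exhaust $U$ by open convex sets $U_k$ with $\overline{U_k}\subset U_{k+1}$ and $\bigcup_k U_k=U$, and choose radii $\delta_k\to 0$ adapted to $\dist(\cdot,\partial U)$, so that every $x\in U$ lies in the projection of some ball contained in $W$.

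For (1)$\Rightarrow$(2), suppose for contradiction that the graph of $f$ contains a line in direction $v$. By the preceding equivalence, $\ell_x:=x+\R v\subset U$ for every $x\in U$, and $f|_{\ell_x}$ is affine with slope $a$ independent of $x$. Let $g\in C^{1,1}_{\rm loc}(U)$ be convex with $|\{f\neq g\}|<\varepsilon$, and set $h:=g-f$; then $h$ is convex on each $\ell_x$ (a convex function minus an affine one), so $\{h=0\}\cap\ell_x$ is a closed convex subinterval of $\ell_x$ whose complement is either empty or of infinite one-dimensional Lebesgue measure. Applying Fubini in direction $v$ to $|\{h\neq 0\}|<\varepsilon$ forces the complement to be empty for almost every line $\ell_x$, i.e.\ $g=f$ on $\ell_x$ for a.e.\ such line. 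Combining over these lines and using continuity of $g$ and $f$, we obtain $g=f$ identically on $U$, forcing $f\in C^{1,1}_{\rm loc}(U)$---a contradiction to the standing hypothesis.

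The main obstacle is the domain-control step in (2)$\Rightarrow$(1): producing a single $g:U\to\R$ of class $C^{1,1}_{\rm loc}$ whose epigraph projects onto all of $U$, while keeping $|\{f\neq g\}|$ uniformly controlled up to $\partial U$. The variable-radius ball union, modelled on the constructions in the proofs of Theorems~\ref{T2} and~\ref{global theorem}, is the natural tool, but it requires careful bookkeeping to ensure that $\{f\neq g\}$ does not accumulate near $\partial U$ where the radius must shrink.
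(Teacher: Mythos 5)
Your overall framework matches the paper's: pass to the closed epigraph $W:=\overline{\operatorname{epi}(f)}\subset\R^{n+1}$, observe that the graph of $f$ contains a line iff $W$ (equivalently $\partial W$) does, apply Remark~\ref{R2} in $\R^{n+1}$ for (2)$\Rightarrow$(1), and for (1)$\Rightarrow$(2) run a Fubini argument along the lineality direction. One small slip in (1)$\Rightarrow$(2): you claim $\{h=0\}\cap\ell_x$ is a closed interval, but a convex function on a line can vanish at exactly two points (the endpoints of $\{h\le 0\}$). The dichotomy you actually use---either $h\equiv 0$ on $\ell_x$ or $\HH^1(\{h\neq 0\}\cap\ell_x)=\infty$---is nonetheless correct, because a convex function on $\R$ that is bounded above must be constant; this is the fact you should invoke.

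The genuine gap is the one you yourself flag in (2)$\Rightarrow$(1): you leave the ``domain control'' step ($\pi(W_{\varepsilon'})\supset U$) as a sketch of a variable-radius ball union, which would in effect require redeveloping the machinery behind Remark~\ref{R2} with non-uniform radii and careful boundary bookkeeping. The paper avoids this entirely. It applies Remark~\ref{R2} unchanged to get $W_\varepsilon\subset W$ with $\HH^n(\partial W\setminus\partial W_\varepsilon)<\varepsilon$, defines $g(x):=\inf\{y:(x,y)\in W_\varepsilon\}$ allowing a priori the value $+\infty$, and then argues \emph{a posteriori} that $g$ is finite on all of $U$. The set $A:=\{x\in U: g(x)<\infty\}$ is convex; if $A\neq U$, pick a hyperplane $\{\ell=c\}$ separating some $x_0\in U$ from $A$. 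Over the slab $\overline U\cap\ell^{-1}(c,\infty)$ the boundary $\partial W$ has infinite $\HH^n$ measure (either $U$ extends to infinity there, giving an infinite-measure graph, or $\partial U$ contributes vertical rays of infinite total measure), so the finite estimate $\HH^n(\partial W\setminus\partial W_\varepsilon)<\varepsilon$ forces $\partial W_\varepsilon$ to meet that slab, i.e.\ some $x\in U\cap\ell^{-1}(c,\infty)$ has $g(x)<\infty$---contradiction. This measure-theoretic argument replaces the construction you were planning and is considerably simpler than any variable-radius exhaustion.
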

\begin{remark}
It was recently proved in \cite{ADH} that if $f:U\to\R$ is locally strongly convex, then there is a locally strongly convex function $g\in C^2(U)$ that satisfies \eqref{eq23} (and other estimates). The proof is however, much more difficult.
\end{remark}

The original proofs of Theorems~\ref{T2},~\ref{T1} and~\ref{global theorem} used the Whitney extension theorem for convex functions \cite{AGM,AM,Azagra2}, and the Alexandrov Theorem~\ref{T7}. Our proofs presented here are elementary and avoid these tools. As explained above, the proofs are based on a simple geometric idea that is also used in our proof of Alexandrov's theorem.

We will prove Theorem~\ref{T1} first and we will use it as a main tool in the proofs of Theorems~\ref{T7} and~\ref{T2}. Indeed, the brief description of the proof of Theorem~\ref{T7} presented above is based on the approximation of the epigraph of $f$ by the convex set $W(\delta)$ of class $C^{1,1}_{\rm loc}$ and this is strictly related to Theorem~\ref{T1}.

Except Section~\ref{S7},
our exposition is elementary and self-contained. We have made an effort to make it accessible to anyone with basic knowledge of real analysis, and no knowledge in convex analysis is required.

The paper is structured as follows.
In Section~\ref{S2} we fix notation and recall basic definitions and facts needed to understand the paper. All results mentioned in this section are well known.
In Section~\ref{S3} we prove Theorem~\ref{T1} and then we use it to prove Corollary~\ref{T14} which is a version of Theorem~\ref{T2}.
This corollary will play a central role in the proofs of Theorems~\ref{T7}, ~\ref{T2} and ~\ref{global theorem}.
Theorems~\ref{T7} and~\ref{T2} are proved in Sections~\ref{S4} and~\ref{S6} respectively. In Section~\ref{S5} we prove Theorem~\ref{T8} as a direct consequence of Theorem~\ref{T7}. This proof is independent of all other sections of the paper and it can be read independently. In Section~\ref{S7} we present the proofs of Theorems~\ref{global theorem}, \ref{corollary for convex hypersurfaces}, and~\ref{global theorem for arbitrary U}.

We made an effort to make different parts of the paper as independent as possible. Section~\ref{S3} is needed in Sections~\ref{S4},~\ref{S6} and~\ref{S7}, but the content in Sections~\ref{S4},~\ref{S6} and~\ref{S7} of the paper stands alone and is not dependent on one another. Similarly Section~\ref{S5} is independent of any other part of the paper.

\section{Preliminaries} 
\label{S2}
In this brief section we will explain notation and basic facts needed in the paper. This section will also clarify necessary prerequisites. By no means the definitions and facts presented here are detailed. The reader may find missing details in standard textbooks. 

Balls in $\R^n$ are denoted by $B(x,r)$ or $B^n(x,r)$. The unit sphere in $\R^n$ that is centered at the origin is denoted by $\bbbs^{n-1}$. The interior of a set $A$ is denoted by $\operatorname{int} A$. An interval in $\R^n$ with endpoints $x,y\in\R^n$ is denoted by $[x,y]$.
The scalar product of vectors $u,v\in\R^n$ is denoted by $\langle u,v\rangle$.

The Lebesgue measure of $A\subset\R^n$ is denoted by $|A|$. We say that $x\in \R^n$  is a {\em density point} of a measurable set $A\subset\R^n$ if $\frac{|A\cap B(x,r)|}{|B(x,r)|}\to 1$ as $r\to 0^+$. It follows from the Lebesgue differentiation theorem that almost all points $x\in A$ are density points of $A$. 

The Hausdorff measure is denoted by $\HH^s$. It follows from the definition that if $f$ is $L$-Lipschitz, then $\HH^s(f(A))\leq L^s\HH^s(A)$. If $A\subset\R^n$, $\lambda>0$ and $\lambda A=\{\lambda x:\, x\in A\}$ is the dilation of $A$ by the factor $\lambda$, then $\HH^s(\lambda A)=\lambda^s\HH^s(A)$. $\HH^n$ coincides with the Lebesgue measure in $\R^n$.

We say that $f\in C^{1,1}(U)$ ($f\in C^{1,1}_{\rm loc}(U)$), if $U\subset\R^n$ is open, $f\in C^1(U)$, and $D f$ is Lipschitz (locally Lipschitz) continuous on $U$.
If $f\in C^{1,1}(B^n(0,R))$, then it follows that
\begin{equation}
\label{eq14}
|f(y)-f(x)-Df(x)(y-x)|\leq M|y-x|^2
\quad
\text{for all } x,y\in B^n(0,R),
\end{equation}
where $M$ is the Lipschitz constant of $Df$. Indeed, we can write $f(y)-f(x)=Df(\xi)(y-x)$ for some $\xi\in [x,y]$ and \eqref{eq14} follows. This inequality implies that if $f\in C^{1,1}_{\rm loc}(U)$, where $U\subset\R^n$ is open, then
\begin{equation}
\label{eq15}
f(y)=f(x)+Df(x)(y-x)+O(|y-x|^2)
\quad
\text{for all } x,y\in U.
\end{equation}
We say that the boundary of a bounded domain $U\subset\R^n$ is of class $C^{1,1}$ if it is locally a graph of a $C^{1,1}$ function.

We  use notation $\nabla f(x)$ for the gradient vector while $Df(x)$ is the linear derivative. With this notation we have $Df(x)v=\langle\nabla f(x),v\rangle$.

If $W\subset\R^n$ is a closed convex set, then it is easy to see that for every $x\in\R^n$, there is a unique point denoted by $\pi_W(x)$ such that
\begin{equation}
\label{eq17}
\pi_W(x)\in W
\qquad
\text{and}
\qquad
|x-\pi_W(x)|=\dist(x,W).
\end{equation}
Clearly, if $x\not\in W$, then $\pi_W(x)\in\partial W$. 
The next result is well known see e.g., \cite[Proposition~3.1.3]{HUL} or \cite[Theorem~1.2.1]{schneider}.
\begin{lemma}
\label{T15}
$\pi_W:\R^n\to W$ is $1$-Lipschitz.
\end{lemma}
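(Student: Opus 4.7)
The plan is to prove the inequality $|\pi_W(x)-\pi_W(y)|\le|x-y|$ via the standard variational characterization of metric projection onto a closed convex set, followed by a one-line Cauchy--Schwarz argument.

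First I would establish the \emph{obtuse angle} inequality: for every $x\in\R^n$ and every $w\in W$,
\[
\langle x-\pi_W(x),\, w-\pi_W(x)\rangle \le 0.
\]
To see this, fix $w\in W$ and consider the function $\varphi:[0,1]\to\R$ defined by
\[
\varphi(t) = \bigl|x-\bigl((1-t)\pi_W(x)+tw\bigr)\bigr|^2.
\]
By convexity of $W$ the point $(1-t)\pi_W(x)+tw$ lies in $W$ for $t\in[0,1]$, so by definition of $\pi_W(x)$ we have $\varphi(t)\ge\varphi(0)$ on $[0,1]$. Hence $\varphi'(0)\ge 0$, and a direct computation of $\varphi'(0)$ gives the claimed inequality.

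Next, given $x,y\in\R^n$, set $p=\pi_W(x)$ and $q=\pi_W(y)$. Applying the obtuse angle inequality once with base point $x$ and $w=q\in W$, and once with base point $y$ and $w=p\in W$, I obtain
\[
\langle x-p,\, q-p\rangle \le 0, \qquad \langle y-q,\, p-q\rangle \le 0.
\]
Adding these two and regrouping yields
\[
|p-q|^2 \le \langle x-y,\, p-q\rangle \le |x-y|\,|p-q|
\]
by Cauchy--Schwarz, and cancelling $|p-q|$ (trivial if $p=q$) gives $|\pi_W(x)-\pi_W(y)|\le|x-y|$.

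There is essentially no obstacle here: the only nontrivial ingredient is the obtuse angle inequality, which is the infinitesimal consequence of the minimality of $\pi_W(x)$ and convexity of $W$. Since the ambient space is $\R^n$, no functional-analytic machinery is required, and the argument is fully elementary and self-contained, in keeping with the stated spirit of the paper.
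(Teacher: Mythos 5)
Your proof is correct and is the standard textbook argument for nonexpansiveness of the metric projection onto a closed convex set. The paper itself does not prove Lemma~\ref{T15}; it simply cites it as well known (referring to Hiriart-Urruty--Lemar\'echal and Schneider), and the argument you give via the obtuse-angle variational inequality plus Cauchy--Schwarz is precisely the one found in those references.
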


The {\em convex hull} of a set $A\subset\R^n$ (defined as the intersection of all convex sets containing $A$, or equivalently, as the set of all convex combinations of points of $A$) is denoted by $\operatorname{co}(A)$.
Every closed and convex set $W\subset\R^n$ is the intersection of all closed half-spaces that contain $W$. In fact,  for every $x\in \partial W$ there is a half-space $H_x$ such that $W\subset H_x$ and $x\in T_x\cap W$, where $T_x=\partial H_x$. The hyperplane $T_x$ is called a {\em hyperplane supporting} $W$ at $x$. Thus for every $x\in \partial W$, there is a hyperplane supporting $W$ at $x$, but such a hyperplane is not necessarily unique. This implies that if $U\subset\R^n$ is open and convex and $f:U\to\R$ is convex, then for every $x\in U$ there is $v\in\R^n$ such that $f(y)\geq f(x)+\langle v,y-x\rangle$ for all $y\in U$. 
Indeed, on the right hand side we have an equation of the supporting hyperplane of the convex {\em epigraph} $\operatorname{epi}(f)=\{(x,y)\in U\times\R:\, x\in U,\ y\geq f(x)\}$.
The set of all such $v$ is denoted by $\partial f(x)$ and called the {\em subdifferential} of $f$ at $x$. Thus $\partial f(x)\neq\varnothing$ for any $x\in U$. If in addition $f$ is differentiable at $x_o$, then $\partial f(x_o)=\{\nabla f(x_o)\}$ 
i.e., $f(y)\geq f(x_o)+Df(x_o)(y-x_o)$ meaning that the tangent hyperplane to the graph of $f$ at $x_o$ is the unique hyperplane supporting the epigraph of $f$ at $(x_o,f(x_o))$. Convex functions are locally Lipschitz continuous and hence they are differentiable a.e. by the Rademacher theorem, so $\partial f(x)=\{\nabla f(x)\}$ for almost all $x\in U$. In fact we will prove the a.e. differentiability of convex functions directly and {\em without} any reference to  the Rademacher theorem, see Corollary~\ref{T4} and Remark~\ref{R1}, but we will need the Rademacher theorem in the proof of Theorem~\ref{T7}, because we will need to know that the gradient of a convex function $g\in C^{1,1}$ is differentiable a.e.

\section{Proof of Theorem~\ref{T1}}
\label{S3}
We will precede the proof with auxiliary results.

For a convex body $K\subset\R^n$ and $r>0$ we define the {\em inner parallel convex body} by
$$
K_r:=\{x\in K:\, \dist(x,\partial K)\geq r\}.
$$
\begin{lemma}
\label{T18}
$K_r$ is convex for any $r>0$.
\end{lemma}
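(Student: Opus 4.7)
The plan is to reduce the problem to a Minkowski-difference characterization. Specifically, I would first establish the identity
\[
K_r \;=\; \{x\in\R^n : B(x,r)\subset K\}.
\]
The nontrivial inclusion is $\supset$: if $B(x,r)\subset K$ then in particular $x\in K$ and $\dist(x,\partial K)\geq r$. For $\subset$, one uses the geometric fact that, for a convex body $K$ and $x\in K$, one has $\dist(x,\partial K)=\dist(x,\R^n\setminus K)$. This is the key small lemma: if $x\in\operatorname{int} K$ and $y\notin K$, the segment $[x,y]$ must cross $\partial K$ at some point $z$, and $|x-z|\leq |x-y|$; conversely every boundary point is a limit of exterior points. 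Hence $\dist(x,\partial K)\geq r$ forces $B(x,r)\subset K$.

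Once this characterization is in hand, convexity is immediate. I would present it directly: given $x,y\in K_r$ and $t\in[0,1]$, for any $v\in B(0,r)$ the points $x+v$ and $y+v$ lie in $K$, and by convexity of $K$,
\[
t(x+v)+(1-t)(y+v) \;=\; (tx+(1-t)y)+v \;\in\; K,
\]
so $B(tx+(1-t)y,r)\subset K$, i.e.\ $tx+(1-t)y\in K_r$. Equivalently, one could write $K_r=\bigcap_{v\in B(0,r)}(K-v)$ as an intersection of convex translates of $K$, and convexity follows from the general fact that intersections of convex sets are convex.

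I do not anticipate any real obstacle; the only content is the identity $\dist(x,\partial K)=\dist(x,\R^n\setminus K)$ for $x\in K$, which is the place where convexity of $K$ is genuinely used. Everything else is formal. The argument needs no Lipschitz projection or supporting hyperplane — it is essentially a one-line consequence of the definition of convexity applied to balls.
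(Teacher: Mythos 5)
Your proof is correct, and the underlying idea is the same as the paper's: characterize $K_r$ as the set of centers of $r$-balls contained in $K$, then use convexity of $K$ itself. The paper phrases the final step via the convex hull of two balls, $\overbar{B}(z,r)\subset\operatorname{co}(\overbar{B}(x,r)\cup\overbar{B}(y,r))\subset K$, while you phrase it via the Minkowski-difference identity $K_r=\bigcap_{v\in B(0,r)}(K-v)$ and the stability of convexity under intersections. The two are equivalent, but your formulation is arguably cleaner and more structural, and it makes transparent that no Lipschitz projection or supporting-hyperplane machinery is needed.

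One small inaccuracy worth flagging: you say the identity $\dist(x,\partial K)=\dist(x,\R^n\setminus K)$ for $x\in\operatorname{int}K$ is ``the place where convexity of $K$ is genuinely used.'' In fact this identity is a purely topological fact that holds for any closed set $K$ with nonempty interior (the segment argument and the fact that $\partial K\subset\overline{\R^n\setminus K}$ require only connectedness and closedness). The genuine use of convexity in your proof is the step $(tx+(1-t)y)+v\in K$, i.e.\ that each translate $K-v$ is convex. This does not affect the validity of the argument, only the attribution of where convexity matters.
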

\begin{proof}
Let $x,y\in K_r$. We need to show that $[x,y]\subset K_r$. Clearly, 
$\overbar{B}(x,r),\overbar{B}(y,r)\subset K$ and for any $z\in [x,y]$,
$
\overbar{B}(z,r)\subset \operatorname{co}(\overbar{B}(x,r)\cup\overbar{B}(y,r))\subset K,
$
so $\dist(z,\partial K)\geq r$, $z\in K_r$, and hence $[x,y]\subset K_r$. 
\end{proof}
Let $r_o=\sup_{x\in K}\dist(x,\partial K)$. Clearly $K_r=\varnothing$ for $r>r_o$.
$K_{r_o}\neq\varnothing$, but it has empty interior. However, for $r\in (0,r_o)$, $K_r$ has non-empty interior, so $K_r$ is a convex body only for $r\in (0,r_o)$. 

For a convex body $K$ and $r>0$ we also define
\begin{equation}
\label{eq1}
K(r):=\bigcup\{\overbar{B}(x,r):\, \overbar B(x,r)\subset K\}.
\end{equation}
It is easy to see that $K(r)$ is convex and compact (it can be empty). Moreover, if $K$ contains a ball of radius $r_o$, then for any $r\in (0,r_o]$, $K(r)$ has non-empty interior and hence $K(r)$ is a convex body.
\begin{lemma}
\label{T5}
If a convex body $K$ contains a ball of radius $r_o$, then for all $r\in (0,r_o)$, $K_r$ is a convex body, and
\begin{equation}
\label{eq2}
\HH^{n-1}(\partial K_r)\leq \HH^{n-1}(\partial K\cap\partial K(r)).
\end{equation}
\end{lemma}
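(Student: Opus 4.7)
The plan is to prove the lemma in three steps: first verify that $K_r$ is a convex body, then identify a well-chosen 1-Lipschitz map from $\partial K\cap\partial K(r)$ onto $\partial K_r$, then conclude by the standard behavior of Hausdorff measure under Lipschitz maps. The convexity of $K_r$ is already granted by Lemma~\ref{T18}, so for Step 1 I would only need to check that $K_r$ has non-empty interior when $K$ contains a ball $\overbar B(p,r_o)$ and $r<r_o$. For any $x\in\overbar B(p,r_o-r)$ we have $\overbar B(x,r)\subset\overbar B(p,r_o)\subset K$ and hence $\operatorname{dist}(x,\partial K)\ge r$, so $\overbar B(p,r_o-r)\subset K_r$.

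For the main inequality, the key geometric observation I plan to exploit is that every point of $\partial K_r$ lies at distance \emph{exactly} $r$ from $\partial K$, and that the outward nearest-point projection onto $K_r$ realizes a surjection $\partial K\cap\partial K(r)\twoheadrightarrow\partial K_r$. More precisely, let $c\in\partial K_r$. Since $\operatorname{dist}(\cdot,\partial K)$ is continuous and the condition $\operatorname{dist}(c,\partial K)>r$ would place $c$ in the interior of $K_r$, we must have $\operatorname{dist}(c,\partial K)=r$, so there exists $y\in\partial K$ with $|y-c|=r$. Because $\operatorname{dist}(c,\partial K)\ge r$ gives $\overbar B(c,r)\subset K$, this ball is one of the balls defining $K(r)$ in \eqref{eq1}, so $y\in K(r)$; combined with $y\in\partial K$ and $K(r)\subset K$ this forces $y\in\partial K\cap\partial K(r)$. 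I then claim $\pi_{K_r}(y)=c$: we have $|y-c|=r$ and any $c'\in K_r$ with $|y-c'|<r$ would put $y$ in the open ball $\operatorname{int}\overbar B(c',r)\subset\operatorname{int}K$, contradicting $y\in\partial K$.

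The conclusion is then immediate: by Lemma~\ref{T15} the projection $\pi_{K_r}:\R^n\to K_r$ is $1$-Lipschitz, so Hausdorff measure does not grow under it, and surjectivity onto $\partial K_r$ gives
\[
\HH^{n-1}(\partial K_r)=\HH^{n-1}\!\bigl(\pi_{K_r}(\partial K\cap\partial K(r))\bigr)\le \HH^{n-1}(\partial K\cap\partial K(r)),
\]
which is \eqref{eq2}. The only subtlety to be careful about is Step~2, verifying that the $y$ produced for each $c\in\partial K_r$ actually lies in $\partial K(r)$ (not merely in $K(r)$) and that $c$ is genuinely the nearest-point projection of $y$ onto $K_r$; both follow from the observation that $\operatorname{dist}(c,\partial K)$ equals exactly $r$ on $\partial K_r$, which is the geometric heart of the argument. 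Everything else is either cited from the preliminaries or a direct consequence of the definitions.
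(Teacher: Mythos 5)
Your proof is correct and follows essentially the same approach as the paper: you establish the surjection $\pi_{K_r}\bigl(\partial K\cap\partial K(r)\bigr)=\partial K_r$ and then invoke the $1$-Lipschitz property of the nearest-point projection from Lemma~\ref{T15}. Your write-up is in fact a bit more explicit than the paper's at two small points --- showing that $\dist(c,\partial K)=r$ on $\partial K_r$, and showing that no $c'\in K_r$ can satisfy $|y-c'|<r$ (so that $c$ really is $\pi_{K_r}(y)$) --- both of which the paper leaves implicit.
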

\begin{proof}
Clearly, for $r\in (0,r_o)$, $K_r$ has non-empty interior, so it is a convex body by Lemma~\ref{T18}.
Observe that (see \eqref{eq17})
\begin{equation}
\label{eq7}
\pi_{K_r}(\partial K\cap \partial K(r))=\partial  K_r.
\end{equation}
Indeed, if $z\in \partial K_r$, then there is $x\in\partial K$, such that $|x-z|=r$. Therefore,
$x\in\overbar{B}(z,r)\subset K$, and hence $x\in K(r)$. Thus,
$x\in\partial K\cap\partial K(r)$,
$|x-z|=r\geq\dist(x,K_r)$, and hence $z=\pi_{K_r}(x)$.
Now, \eqref{eq2} follows from \eqref{eq7} and the fact that $\pi_{K_r}$ is $1$-Lipschitz (Lemma~\ref{T15}).
\end{proof}
The next beautiful result is due to McMullen~\cite{mcmullen}. 
While it can be concluded from 
Alexandrov's theorem, we present here a direct and surprisingly elementary proof which is a small modification of McMullen's argument. 
In fact, Lemma~\ref{T6} will play an important role in our proof of Alexandrov's theorem.
\begin{lemma}
\label{T6}
If $K\subset\bbbr^n$ is a convex body, then 
$\lim_{r\to 0^+}\HH^{n-1}(\partial K\setminus \partial K(r))=0.$
\end{lemma}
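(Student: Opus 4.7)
My plan is to combine inequality~\eqref{eq2} with a lower bound on $\HH^{n-1}(\partial K_r)$ obtained from a simple scaling comparison. Specifically, I will show that $\HH^{n-1}(\partial K_r)\geq(1-r/d)^{n-1}\HH^{n-1}(\partial K)$ for a suitable constant $d>0$, and this, together with \eqref{eq2}, immediately forces $\HH^{n-1}(\partial K\setminus\partial K(r))\to 0$.

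\textbf{Step 1 (Scaling trick).} Fix $o\in\operatorname{int} K$ and set $d:=\dist(o,\partial K)>0$, so $\overline B(o,d)\subset K$. For $\lambda\in(0,1)$ consider the homothety $T_\lambda(x):=o+\lambda(x-o)$. I claim $T_\lambda(K)\subset K_{(1-\lambda)d}$. Indeed, for $x\in K$ and $|w|\leq(1-\lambda)d$, the point $T_\lambda(x)+w=\lambda x+(1-\lambda)\bigl(o+w/(1-\lambda)\bigr)$ is a convex combination of $x\in K$ and $o+w/(1-\lambda)\in\overline B(o,d)\subset K$; hence $\overline B(T_\lambda(x),(1-\lambda)d)\subset K$.

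\textbf{Step 2 (Monotonicity of surface area).} Next I establish that for any convex bodies $L_1\subset L_2$, $\HH^{n-1}(\partial L_1)\leq\HH^{n-1}(\partial L_2)$. Since $\pi_{L_1}$ is $1$-Lipschitz by Lemma~\ref{T15}, it suffices to verify $\pi_{L_1}(\partial L_2)=\partial L_1$. Given $z\in\partial L_1$, choose a supporting hyperplane of $L_1$ at $z$ with outward unit normal $\nu$, and let $y$ be the point where the ray $\{z+t\nu:t\geq 0\}$ exits $L_2$. The inclusion $L_1\subset\{x:\langle x-z,\nu\rangle\leq 0\}$ and a direct expansion give $|y-x|^2\geq|y-z|^2$ for every $x\in L_1$, so $\pi_{L_1}(y)=z$.

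\textbf{Step 3 (Conclusion).} For $r<d$ choosing $\lambda=1-r/d$ in Step 1 yields $T_\lambda(K)\subset K_r$; since $T_\lambda$ scales $\HH^{n-1}$ by $\lambda^{n-1}$, Step 2 gives
$$\HH^{n-1}(\partial K_r)\geq (1-r/d)^{n-1}\HH^{n-1}(\partial K).$$
Combining with \eqref{eq2} and the disjoint decomposition $\partial K=(\partial K\cap\partial K(r))\cup(\partial K\setminus\partial K(r))$, I conclude
$$\HH^{n-1}(\partial K\setminus\partial K(r))\leq \bigl[1-(1-r/d)^{n-1}\bigr]\HH^{n-1}(\partial K)\longrightarrow 0$$
as $r\to 0^+$. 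The finiteness of $\HH^{n-1}(\partial K)$ used here is itself a consequence of Step 2, applied to the inclusion $K\subset\overline B(0,R)$ for $R$ large.

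The essential (and only) subtle step is Step 2, specifically the surjectivity of $\pi_{L_1}|_{\partial L_2}$, which requires the short geometric argument via supporting hyperplanes described above. Steps 1 and 3 are then routine consequences of convexity and scaling.
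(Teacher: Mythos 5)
Your proof is correct and takes essentially the same approach as the paper: both bound $\HH^{n-1}(\partial K_r)$ from below by fitting a homothetic copy of $K$ inside $K_r$, use the $1$-Lipschitz nearest-point projection (Lemma~\ref{T15}) for surface-area monotonicity, and combine with \eqref{eq2}. The only difference is cosmetic: you work with the explicit scale factor $\lambda=1-r/d$, which yields the quantitative rate $\HH^{n-1}(\partial K\setminus\partial K(r))\leq\bigl[1-(1-r/d)^{n-1}\bigr]\HH^{n-1}(\partial K)$, whereas the paper uses the implicit $\lambda(r)=\inf\{\lambda>0:K\subset\lambda K_r\}$ and shows $\lambda(r)\to 1$ via the very same homothety inclusion.
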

\begin{remark}
Lemma~\ref{T6} has the following geometric interpretation: for almost all $x\in\partial K$, there is a closed ball $\overbar{B}\subset K$ touching the boundary of $K$ at $x$, i.e., $x\in\overbar{B}$.
\end{remark}
\begin{proof}
Without loss of generality we may assume that
$\overbar{B}(0,r_o)\subset K$. If $r\in (0,r_o)$, then $0$ belongs to the interior of $K_r$. For $\lambda >0$ we define
$$
\lambda K_r:=\{\lambda z :\, z\in K_r\},
$$
that is, $\lambda K_r$ is a dilation of $K_r$.
For $r\in (0,r_o)$, let
$$
\lambda(r):=\inf\{\lambda>0:\, K\subset \lambda K_r\}.
$$
Clearly, $K\subset\lambda(r)K_r$. It is easy to see that the function $r\mapsto\lambda(r)$ is non-decreasing and $\lambda(r)\to 1$ as $r\to 0^+$. Indeed, for any $\eps>0$, $(1+\eps)^{-1}K\subset \operatorname{int}K$, and hence
$\delta:=\dist((1+\eps)^{-1}K,\partial K)>0$, so for all $r\in (0,\delta]$
$$
(1+\eps)^{-1}K\subset K_r,
\qquad
\text{i.e.,} 
\qquad 
K\subset (1+\eps)K_r.
$$
In other words $1\leq\lambda(r)\leq 1+\eps$ for all $0<r\leq\delta$ proving that $\lambda(r)\to 1$ as $r\to 0^+$.

It is easy to see that $\pi_K(\partial(\lambda(r)K_r))=\partial K$ (see \eqref{eq17}). Indeed, if $x\in\partial K$ and $\nu(x)$ is the outer unit normal vector to a supporting hyperplane of $K$ at $x$, then there is $t\geq 0$ such that $z:=x+t\nu(x)\in\partial(\lambda(r)K_r)$ and it easily follows that $\pi_K(z)=x$. Since $\pi_K$ is $1$-Lipschitz and it maps $\partial(\lambda(r)K_r)$ onto $\partial K$, we have that
\[
\begin{split}
\HH^{n-1}(\partial K)
&\leq 
\HH^{n-1}(\partial(\lambda(r)K_r))=
\lambda(r)^{n-1}\HH^{n-1}(\partial K_r)\leq
\lambda(r)^{n-1}\HH^{n-1}(\partial K\cap \partial K(r))\\
&\leq 
\lambda(r)^{n-1}\HH^{n-1}(\partial K) \to \HH^{n-1}(\partial K)
\quad
\text{as } r\to 0^+.
\end{split}
\]
Therefore, $\HH^{n-1}(\partial K\cap\partial K(r))\to\HH^{n-1}(\partial K)$, as $r\to 0^+$.
This completes the proof of Lemma~\ref{T6}.
\end{proof}
\begin{lemma}
\label{T19}
Let $f,g:B^n(0,R)\to\R$ be convex functions. If $g\in C^{1,1}$, $f\leq g$ and $f(x)=g(x)$ for some $x\in B^n(0,R)$, then $f$ is differentiable at $x$, $Df(x)=Dg(x)$ and 
\begin{equation}
\label{eq10}
f(y)=f(x)+Df(x)(y-x)+O(|y-x|^2).
\end{equation}
\end{lemma}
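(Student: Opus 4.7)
The plan is to sandwich $f$ between two functions that agree with it at $x$ up to order $|y-x|^2$. The upper sandwich comes for free from $g \in C^{1,1}$: by \eqref{eq14}, for some constant $M$ we have
$$
g(y) \le g(x) + Dg(x)(y-x) + M|y-x|^2,
$$
and since $f \leq g$ and $f(x)=g(x)$ this yields
$$
f(y) - f(x) \leq Dg(x)(y-x) + M|y-x|^2
\qquad \text{for all } y \in B^n(0,R).
$$

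The matching lower bound is where the convexity of $f$ enters. By the discussion in Section~\ref{S2}, $\partial f(x)$ is nonempty, so pick any $v \in \partial f(x)$. For every $y$,
$$
g(y) \geq f(y) \geq f(x) + \langle v, y-x\rangle = g(x) + \langle v, y-x\rangle,
$$
which says exactly that $v \in \partial g(x)$. Since $g$ is differentiable at $x$, the subdifferential of $g$ at $x$ is the singleton $\{\nabla g(x)\}$, so $v = \nabla g(x)$. Thus the only possible subgradient of $f$ at $x$ is $\nabla g(x)$, giving
$$
f(y) - f(x) \geq \langle \nabla g(x), y-x\rangle = Dg(x)(y-x).
$$

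Combining the two inequalities yields
$$
0 \leq f(y) - f(x) - Dg(x)(y-x) \leq M|y-x|^2,
$$
so dividing by $|y-x|$ and letting $y \to x$ shows that $f$ is differentiable at $x$ with $Df(x) = Dg(x)$, while the upper estimate gives the quadratic remainder bound \eqref{eq10}. There is no real obstacle here; the only step that requires a moment's thought is the inclusion $\partial f(x) \subset \partial g(x)$, which is immediate once one writes out the defining inequality of a subgradient and uses $f \leq g$ with equality at $x$.
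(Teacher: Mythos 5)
Your proof is correct and follows essentially the same route as the paper: show $\partial f(x)\subset\partial g(x)=\{\nabla g(x)\}$ via $f\le g$ with equality at $x$, then sandwich $f(y)$ between the subgradient lower bound and the $C^{1,1}$ upper bound \eqref{eq14}. You simply spell out the two inequalities a bit more explicitly than the paper's one-line estimate.
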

\begin{proof}
If $v\in\partial f(x)$, then clearly, $v\in\partial g(x)$ and hence $v=\nabla g(x)$. Therefore, the result follows from the estimate
$$
f(x)+\langle\nabla g(x),y-x\rangle\leq f(y)\leq g(y)=f(x)+\langle\nabla g(x),y-x\rangle+O(|y-x|^2),
$$
where in the last equality we used \eqref{eq15} and the fact that $g(x)=f(x)$.
\end{proof}
\begin{corollary}
\label{T4}
If $f:\bbbr^n\to \R$ is convex, then it is differentiable a.e. Moreover
\begin{equation}
\label{eq4}
f(y)=f(x)+Df(x)(y-x)+O(|y-x|^2) 
\qquad
\text{for almost all $x\in\R^n$.}
\end{equation}
\end{corollary}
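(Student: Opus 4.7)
The plan is to combine Lemma~\ref{T19} with the local $C^{1,1}$ upper approximation of $f$, which is Corollary~\ref{T14} (the localised version of Theorem~\ref{T2} derived earlier in this section from Theorem~\ref{T1}, and summarised in the introduction): for every $R>0$ and every $\eps>0$ there exists a convex function $g\in C^{1,1}_{\rm loc}(\R^n)$ with $g\geq f$ and $|\{x\in B^n(0,R):\, f(x)\neq g(x)\}|<\eps$. The idea is simply that on the contact set $\{f=g\}$, Lemma~\ref{T19} transfers the $C^{1,1}$ regularity of $g$ into differentiability of $f$ with a quadratic error, and the approximation guarantees that this contact set has large measure in $B^n(0,R)$.

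For the execution, I would fix $R>0$ and, for each integer $k\geq 1$, apply the approximation with $\eps=1/k$ to obtain a convex function $g_k\in C^{1,1}_{\rm loc}(\R^n)$ with $g_k\geq f$ and
$$
|\{x\in B^n(0,R):\, f(x)\neq g_k(x)\}|<\frac{1}{k}.
$$
Set $E_{R,k}:=\{x\in B^n(0,R):\, f(x)=g_k(x)\}$. Since $g_k\in C^{1,1}(B^n(0,2R))$ and $g_k\geq f$ on this larger ball, Lemma~\ref{T19} (applied on $B^n(0,2R)$) gives, for every $x\in E_{R,k}$, that $f$ is differentiable at $x$ with $Df(x)=Dg_k(x)$ and
$$
f(y)=f(x)+Df(x)(y-x)+O(|y-x|^2).
$$
Setting $E_R:=\bigcup_{k\geq 1} E_{R,k}$ we have $|B^n(0,R)\setminus E_R|\leq 1/k$ for every $k$, so $E_R$ has full measure in $B^n(0,R)$. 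Taking the union over $R=1,2,3,\ldots$ and invoking countable subadditivity of null sets shows that the set of $x\in\R^n$ at which $f$ is differentiable and \eqref{eq4} holds has full measure in $\R^n$, which is exactly the conclusion of the corollary.

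Since both ingredients — Lemma~\ref{T19} and the approximation Corollary~\ref{T14} — are already in hand, there is essentially no obstacle at this point. The only minor subtlety is that the implicit constant in the $O(|y-x|^2)$ term may depend on $x$ (through the Lipschitz constant of $\nabla g_k$ near $x$, which may blow up as $k\to\infty$), but this is exactly what the pointwise statement~\eqref{eq4} allows. All the real work has been absorbed into the construction of the $C^{1,1}_{\rm loc}$ upper envelopes coming from Theorem~\ref{T1}; it is those envelopes that make Lemma~\ref{T19} directly effective at almost every point.
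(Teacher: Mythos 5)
Your proof is correct, but it takes a noticeably heavier route than the paper. The paper's proof of Corollary~\ref{T4} is a two-line application of Lemma~\ref{T6} alone: by that lemma, for $\HH^n$-almost every point of $\partial W$ (where $W$ is a truncation of the epigraph of $f$, as in the later proof of Corollary~\ref{T14}) there is a closed ball contained in $W$ touching $\partial W$ there, and the lower hemisphere of that ball is parameterized by a $C^\infty$ convex function $g\geq f$ with $g(x)=f(x)$, so Lemma~\ref{T19} applies directly. Crucially this bypasses Lemma~\ref{T3} entirely: one never needs to know that the union of all such balls, $W(\delta)$, has $C^{1,1}$ boundary, only that a single ball is smooth. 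Your argument instead invokes the full approximation machinery of Corollary~\ref{T14}, which does rely on Lemma~\ref{T3} and appears later in the section. There is no circularity (Corollary~\ref{T14} does not depend on Corollary~\ref{T4}), and your exhaustion by $\eps=1/k$ and $R=1,2,\ldots$ is sound, so the conclusion is valid; but you pay for an unnecessary ingredient, and you have essentially reproduced the structure of the proof of Theorem~\ref{T7} one step early rather than the intended lightweight argument. Two small points of hygiene: Corollary~\ref{T14} gives $g\in C^{1,1}(B^n(0,R))$, not $g\in C^{1,1}_{\rm loc}(\R^n)$; and if you want to apply Lemma~\ref{T19} on $B^n(0,2R)$ you should apply Corollary~\ref{T14} with radius $2R$ from the start (applying it on $B^n(0,R)$ and then citing $g_k\in C^{1,1}(B^n(0,2R))$ is a mismatch, albeit harmless since the argument works on any fixed ball).
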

\begin{proof}
Since the boundary of a ball is parameterized by a smooth convex function, Lemma~\ref{T19} implies \eqref{eq4} whenever there is a ball in the epigraph of $f$ that touches the graph of $f$ at $(x,f(x))$ and it follows from Lemma~\ref{T6} that it is true for almost all $x$.
\end{proof}
\begin{remark}
\label{R1}
Note that the proof of Corollary~\ref{T4} does not use Rademacher's theorem. Moreover, the estimate \eqref{eq4}, is stronger than the a.e. differentiability of $f$ that would follow from an application of Rademacher's theorem.
We will not need Corollary~\ref{T4} in this paper.
\end{remark}

The following result, was proven in a more general form in the unpublished work \cite[Theorem~1, p.\ 32]{lucas}. It is also mentioned without any proof or reference in \cite{kiselman}. Although a detailed proof can be found in \cite[Proposition~2.4.3]{Hormander}, the origin of the result is not referenced in this work.
\begin{lemma}
\label{T3}
A convex body $W$ has $C^{1,1}$ boundary if and only if there is $r>0$ such that $W=W(r)$.
\end{lemma}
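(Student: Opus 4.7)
The plan is to prove the two implications separately.

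For the $(\Leftarrow)$ direction, suppose $W=W(r)$. The key starting point, already used in the proof of Lemma~\ref{T5}, is the identification $W(r)=W_r+\overbar B(0,r)$, so for each $p\in\partial W$ the projection $\pi_{W_r}(p)$ satisfies $|p-\pi_{W_r}(p)|=r$, and the vector
\[
\nu(p):=\frac{p-\pi_{W_r}(p)}{r}\in\bbbs^{n-1}
\]
is a unit outer normal to $W$ at $p$. Lemma~\ref{T15} combined with the triangle inequality yields
\[
|\nu(p)-\nu(q)|\le\frac{|p-q|+|\pi_{W_r}(p)-\pi_{W_r}(q)|}{r}\le\frac{2}{r}|p-q|,
\]
so the Gauss map of $W$ is Lipschitz; uniqueness of $\pi_{W_r}(p)$ simultaneously forces a unique supporting hyperplane at each $p$, giving $\partial W\in C^1$. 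In local graph coordinates, where $\partial W$ is written as $\{(x',\phi(x'))\}$ and $\nu=(\nabla\phi,-1)/\sqrt{1+|\nabla\phi|^2}$, the Lipschitz bound on $\nu$ transfers to a Lipschitz bound on $\nabla\phi$, yielding $\partial W\in C^{1,1}$.

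For the $(\Rightarrow)$ direction, assume $\partial W\in C^{1,1}$. By compactness of $\partial W$, I would extract uniform constants $\rho,L>0$ such that at every $p\in\partial W$, after an orthogonal change of coordinates centered at $p$ with $\nu(p)=-e_n$ and $W$ lying above the graph, the piece $\partial W\cap\overbar B(p,\rho)$ is the graph of a convex $C^{1,1}$ function $\phi_p$ satisfying $\phi_p(0)=0$, $\nabla\phi_p(0)=0$, and $\nabla\phi_p$ is $L$-Lipschitz. Convexity together with the $C^{1,1}$ bound forces $0\le\phi_p(x')\le L|x'|^2/2$. I would then fix $r$ with $r<\min(\rho/2,1/L)$ and claim that $\overbar B(p-r\nu(p),r)\subset W$ for every $p\in\partial W$, which will give $\partial W\subset W(r)$.

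If the claim fails at some $p$, then since $p-r\nu(p)\in\operatorname{int} W$ the ball $B:=\overbar B(p-r\nu(p),r)$ meets both $\operatorname{int} W$ and $W^c$, so by connectedness some $q\in\partial W\cap B$ exists with $q\ne p$. Expanding $|q-(p-r\nu(p))|^2\le r^2$ gives the \emph{ball bound}
\[
\langle q-p,\nu(p)\rangle\le-\frac{|q-p|^2}{2r}.
\]
On the other hand, $|q-p|\le 2r<\rho$ places $q$ inside the local patch around $p$, so $q-p=(x',\phi_p(x'))$ and the quadratic control on $\phi_p$ produces the \emph{graph bound}
\[
\langle q-p,\nu(p)\rangle=-\phi_p(x')\ge-\tfrac{L}{2}|x'|^2\ge-\tfrac{L}{2}|q-p|^2.
\]
Combining and dividing by $|q-p|^2>0$ forces $r\ge 1/L$, contradicting the choice of $r$. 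To upgrade $\partial W\subset W(r)$ to $W=W(r)$, I note that $x\in W$ with $\dist(x,\partial W)\ge r$ lies in $W_r\subset W(r)$, while for $x\in W$ with $t:=\dist(x,\partial W)\in(0,r)$ and nearest boundary point $p$, the $C^1$ regularity of $\partial W$ forces $x=p-t\nu(p)$, so $|x-(p-r\nu(p))|=r-t<r$ and $x\in\overbar B(p-r\nu(p),r)\subset W(r)$.

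The main obstacle is globalizing the inscribed-ball estimate: the $C^{1,1}$ hypothesis is inherently local, yet the conclusion $W=W(r)$ demands a uniform $r$ at which the ball $\overbar B(p-r\nu(p),r)$ lies entirely inside $W$, not merely inside a coordinate patch. Choosing $r$ so that $2r<\rho$ is the device that confines any hypothetical second intersection of the ball with $\partial W$ to the patch around $p$, where the ball bound and the graph bound for $\langle q-p,\nu(p)\rangle$ become incompatible as soon as $r<1/L$; the compactness extraction of uniform $\rho$ and $L$ is the step requiring the most care.
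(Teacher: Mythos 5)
Your $(\Leftarrow)$ direction is essentially the paper's second, detailed proof: identify $\pi_{W_r}(p)$ as the center of a ball of radius $r$ inscribed in $W$ and touching $\partial W$ at $p$, conclude via Lemma~\ref{T15} that the Gauss map $\nu(p)=(p-\pi_{W_r}(p))/r$ is $\frac{2}{r}$-Lipschitz, and transfer this to a Lipschitz bound on $\nabla\phi$ in local graph coordinates. Two details you glide past that the paper addresses explicitly: (i) uniqueness of the supporting hyperplane at $p$ (hence differentiability of $\partial W$) comes from the inscribed ball touching at $p$ --- any supporting hyperplane of $W$ at $p$ also supports the ball there, and the ball has a unique one --- not, as you phrase it, from uniqueness of the nearest-point map; and (ii) the passage from Lipschitz $\nu$ to Lipschitz $\nabla\phi$ is not automatic, because the map sending a unit normal to the corresponding gradient is only locally Lipschitz and blows up as the normal becomes horizontal; one first needs a uniform bound $|\nabla\phi|\le M$ on the patch, which the paper extracts from the geometry of the inscribed ball. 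Neither is a serious gap, but both deserve a sentence.

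The genuine difference is scope. The paper proves only $(\Leftarrow)$ --- the implication it actually uses --- and cites H\"{o}rmander for $(\Rightarrow)$. You supply a self-contained rolling-ball argument for $(\Rightarrow)$: extract uniform patch size $\rho$ and gradient Lipschitz constant $L$ by compactness, take $r<\min(\rho/2,1/L)$, and show that any second boundary point $q\ne p$ of $W$ in $\overbar{B}(p-r\nu(p),r)$ would satisfy the incompatible inequalities $\langle q-p,\nu(p)\rangle\le -|q-p|^2/(2r)$ (from the ball) and $\langle q-p,\nu(p)\rangle\ge -\tfrac{L}{2}|q-p|^2$ (from the graph and convexity), forcing $r\ge 1/L$. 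The upgrade from $\partial W\subset W(r)$ to $W=W(r)$ via $x=p-t\nu(p)$ for the nearest boundary point $p$ is also sound. So your proposal matches the paper on the direction the paper proves, and adds a correct proof of the direction the paper only cites; the cost is the compactness extraction of uniform $\rho,L$, which you flag as the delicate step, and a slightly longer write-up.
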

\begin{remark}
In other words a convex body $W$ has boundary of class $C^{1,1}$ if and only if there is $r>0$ such that $W$ is the union of closed balls of radius $r$.
\end{remark}
We will only prove the implication from right to left, that is we will prove that if $W=W(r)$, then $\partial W$ is of class $C^{1,1}$. This is the only implication that we need in the proof of Theorem~\ref{T1}. For the proof of the implication from left to right, see \cite[Proposition~2.4.3]{Hormander}. We will present two proofs. The first proof is sketched only and it uses the implicit function theorem. The second one is detailed and it does not use the implicit function theorem.
\begin{proof}[First proof]
It is very elementary and easy to prove that if $K\subset\R^n$ is compact and convex, then the function $d^2_K(x)=\dist(x,K)^2$ is differentiable and $\nabla d^2_K(x)=2(x-\pi_K(x))$, see \cite[p. 181]{HUL}. Since the function $\pi_K$ is Lipschitz by Lemma~\ref{T15}, we have that $d_K^2\in C^{1,1}$ and all points in $\R^n\setminus K$ are regular so for $t>0$, $\{x:\,\dist(x,K)=t\}=(d_K^2)^{-1}(t^2)$ is a $C^{1,1}$-submanifold of $\R^n$ and hence it is locally a graph of a $C^{1,1}$ function by the implicit function theorem. It remains to observe that $\partial W(r)=\{x\in \R^n:\, \dist(x,W_r)=r\}$.    
\end{proof}
\begin{proof}[Second proof]
Thus, we assume that for each $p\in\partial W$ there is $h(p)\in W$ such that $p\in\overbar{B}(h(p),r)\subset W$. It follows that the hyperplane $T_p$ tangent to the ball $\overbar{B}(h(p),r)$ at $p$ is the unique hyperplane supporting $W$ at $p$. 

Note that $\dist(h(p),\partial W)=r$ implies that $h(p)\in W_r$, so $|p-h(p)|=r=\dist(p,W_r)$, and hence $h(p)=\pi_{W_r}(p)$ (see \eqref{eq17}).

The inner unit normal vector to $T_p$ is given by
$$
\nu(p)=\frac{h(p)-p}{r}=\frac{\pi_{W_r}(p)-p}{r}
$$
and Lemma~\ref{T15} implies that the function $\nu:\partial W\to \bbbs^{n-1}$ is Lipschitz continuous:
$$
|\nu(p)-\nu(q)|\leq\frac{|\pi_{W_r}(p)-\pi_{W_r}(q)|+|p-q|}{r}\leq\frac{2}{r}|p-q|.
$$
This in turn, implies that the boundary $\partial W$ is of class $C^{1,1}$. Indeed, choose any point $p_o\in\partial W$ and choose a Euclidean coordinate system $(x_1,\ldots,x_n)=(x',x_n)$ such that $p_o=0$ and $T_{p_o}=\{x_n=0\}$. Then $\partial W$ in a neighborhood $U=B^{n-1}(0,\frac{r}{2})$ of $p_o=0$ is a graph of a function $x_n=f(x')$ i.e., $p(x'):=(x',f(x'))\in\partial W$. Since for $x'\in U$, the graph of $f$ lies above $T_{p(x')}$ and below $\overbar{B}(h(p(x')),r)$, it follows from geometric considerations 
(as in the proof of Corollary~\ref{T4}) that $f$ is differentiable at $x'$ and $T_{p(x')}$ is the tangent hyperplane to the graph of $f$ at $p(x')$. 
Note also that $|\nabla f|\leq M$ on $U$ for some $M>0$, because the tangent hyperplane to the graph of $f$ cannot intersect with $B(h(0),r)$.
It remains to show that $\nabla f$ is Lipschitz continuous in $U$.

The inner unit normal vector in terms of $\nabla f$ is given by
$$
\nu(p(x'))=\frac{(-\nabla f(x'),1)}{\sqrt{1+|\nabla f(x')|^2}},
\qquad
\text{so}
\qquad
\pi(\nu(p(x'))=\frac{-\nabla f(x')}{\sqrt{1+|\nabla f(x')|^2}},
$$
where $\pi:\R^{n}\to\R^{n-1}$, $\pi(x',x_n)=x'$ is the orthogonal projection. 
Since $\Psi(\Phi(z))=z$ for all $z\in\R^{n-1}$, where
$\Psi(z)=-z/\sqrt{1-|z|^2}$ and $\Phi(z)=-z/\sqrt{1+|z|^2}$, it follows that
$$
\nabla f(x')=\Psi\left(\frac{-\nabla f(x')}{\sqrt{1+|\nabla f(x')|^2}}\right)=\Psi(\pi(\nu(x',f(x'))))
\quad
\text{for $x'\in U$.}
$$
This proves Lipschitz continuity of $\nabla f$ in $U$, as a composition of Lipschitz functions.
The only issue could be the Lipschitz continuity of $\Psi$: it is a smooth function defined for $|z|<1$, but it is unbounded. However, this does not cause any problems here, because
$$
\left|\frac{-\nabla f(x')}{\sqrt{1+|\nabla f(x')|^2}}\right|\leq \frac{M}{\sqrt{1+M^2}}<1.
$$
\end{proof}
\begin{proof}[Proof of Theorem~\ref{T1}]
Let $K\subset\R^n$ be a convex body. According to Lemma~\ref{T6}, for every $\eps>0$ there is $\delta_o>0$ such that for any $\delta\in (0,\delta_o)$, $\HH^{n-1}(\partial K\setminus\partial K(\delta))<\eps/2$. 
Since $K(\delta)\subset K$, 
it is easy to see that $\pi_{K(\delta)}(\partial K)=\partial K(\delta)$, so 
$\pi_{K(\delta)}(\partial K\setminus\partial K(\delta))=\partial K(\delta)\setminus\partial K$
and Lemma~\ref{T15} yields $\HH^{n-1}(\partial K(\delta)\setminus \partial K)\leq \HH^{n-1}(\partial K\setminus\partial K(\delta))$. Therefore, $\HH^{n-1}(\partial K\triangle\partial K(\delta))<\eps$.
Since the boundary of $K(\delta)$ is of class $C^{1,1}$ by Lemma~\ref{T3}, $W:=K(\delta)$ satisfies the claim of the theorem.
\end{proof}
The next result is a direct consequence of Theorem~\ref{T1} and it is a version of Theorem~\ref{T2}. 
We will use Corollary~\ref{T14} in the proofs of Theorems~\ref{T7}, \ref{T2}, and~\ref{global theorem}.
\begin{corollary}
\label{T14}
Let $f:\R^n\to\R$ be a convex function. Then for every $R>0$ and $\eps>0$, there is a convex function $g\in C^{1,1}(B^n(0,R))$ such that $g\geq f$ and
\begin{equation}
\label{eq11}
|\{ x\in B^n(0,R):\, f(x)\neq g(x)\}|<\eps.
\end{equation}
\end{corollary}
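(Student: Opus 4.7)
The plan is to apply Theorem~\ref{T1} in ambient dimension $n+1$ to a bounded truncation of the epigraph of $f$, and then read off $g$ as the lower envelope of the resulting inner approximation.

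Set $M:=1+\max_{|x|\leq 2R}f(x)$, which is finite by continuity of $f$, and define
\[
K:=\{(x,y)\in\R^{n+1}:\,|x|\leq 2R,\ f(x)\leq y\leq M\},
\]
a convex body in $\R^{n+1}$. Applying Theorem~\ref{T1} in $\R^{n+1}$ furnishes $\delta\in(0,\min(R,1/2))$ such that $W:=K(\delta)$ satisfies $\partial W\in C^{1,1}$ and $\HH^n(\partial K\triangle\partial W)<\eps$. The bound $\delta\leq 1/2$ ensures $\overline{B}^{n+1}((x,M-\delta),\delta)\subset K$ for every $x\in\overline{B}^n(0,2R-\delta)$, so this ball lies in $W$ and hence the $x$-projection $\pi_x(W)$ contains $\overline{B}^n(0,2R-\delta)$, with $\overline{B}^n(0,R)$ in its interior.

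Define $g(x):=\inf\{y:(x,y)\in W\}$ for $x\in B^n(0,R)$; then $g$ is convex (as the lower envelope of a convex set) and $g\geq f$ since $W\subset K$ forces $W_x\subset[f(x),M]$. To obtain $g\in C^{1,1}(B^n(0,R))$, I fix $x_0\in\overline{B}^n(0,R)$: as $x_0$ lies in the interior of $\pi_x(W)$, no supporting hyperplane of $W$ at $(x_0,g(x_0))$ can be vertical, since a vertical one would force $x_0\in\partial\pi_x(W)$. Because $\partial W\in C^{1,1}$, in a neighborhood of $(x_0,g(x_0))$ it is the graph of a $C^{1,1}$ function of $x$, which must coincide with $g$ locally. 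Compactness of $\overline{B}^n(0,R)$ together with uniform Lipschitz control of the normal to $\partial W$ then upgrades this to $g\in C^{1,1}(B^n(0,R))$.

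For the measure estimate, set $A:=\{x\in B^n(0,R):f(x)<g(x)\}$. For each $x\in A$ the point $(x,f(x))$ belongs to $\partial K$ (the graph of $f$ is part of $\partial K$) but not to $W$ (since $f(x)<g(x)=\inf W_x$), so $(x,f(x))\in\partial K\setminus\partial W$. The projection $\pi:\R^{n+1}\to\R^n$ is $1$-Lipschitz and sends $\{(x,f(x)):x\in A\}$ onto $A$, hence
\[
|A|=\HH^n(A)\leq\HH^n(\{(x,f(x)):x\in A\})\leq\HH^n(\partial K\setminus\partial W)<\eps.
\]
The main technical point I anticipate is the $C^{1,1}$ claim for $g$: Theorem~\ref{T1} only provides $C^{1,1}$ regularity of the full hypersurface $\partial W$, so one must separately rule out vertical tangent planes on the lower portion over $\overline{B}^n(0,R)$. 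The generous cylinder radius $2R$ together with the smallness of $\delta$ provide the margin that turns this into a routine projection argument.
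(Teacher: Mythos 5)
Your proof is correct and follows essentially the same route as the paper's: truncate the epigraph to a bounded convex body $K\subset\R^{n+1}$, pass to the inner approximation $K(\delta)$ with $C^{1,1}$ boundary (via Lemma~\ref{T6}/Theorem~\ref{T1}), observe that the base projection of $K(\delta)$ covers a neighborhood of $\overline{B}^n(0,R)$ so its lower envelope $g$ is a $C^{1,1}$ convex function, and transfer the Hausdorff-measure bound on $\partial K\setminus\partial K(\delta)$ to the Lebesgue measure of $\{f\neq g\}$ via the $1$-Lipschitz vertical projection. The only cosmetic differences from the paper are the choice of a taller cap ($y\leq M+2R$ in the paper, $y\leq M$ with $M$ padded by $1$ and $\delta\leq 1/2$ in yours) and your slightly more explicit argument ruling out vertical supporting hyperplanes, both of which serve the same purpose.
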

\begin{proof}
Let $M:=\sup_{\overbar{B}^n(0,2R)} f(x)$ and define
$$
W:=\{(x,y)\in\overbar{B}^n(0,2R)\times\R:\, f(x)\leq y\leq M+2R\}.
$$
That is, $W$ is an $(n+1)$-dimensional convex body bounded by the graph of $f$, the cylinder $\partial B^n(0,2R)\times\R$ and the hyperplane $y=M+2R$. According to Lemma~\ref{T6}, there is $\delta<R$ such that
$$
\HH^n(\partial W\setminus\partial W(\delta))<\eps.
$$
Since $W(\delta)$ is the union of closed balls of radius $\delta<R$ that are contained in $W$, it follows that
$$
\overbar{B}^n(0,2R)\times\{M+R\}\subset W(\delta),
$$
i.e., the intersection of $W(\delta)$ with the hyperplane $y=M+R$ is an $n$-dimensional closed ball of radius $2R$. Thus, if $\pi:\R^{n+1}\to\R^n$ is the orthogonal projection, $\pi(W(\delta))=\overbar{B}^n(0,2R)$, and hence for $x\in \overbar{B}^n(0,2R)$, we can define
$$
g(x):=\inf\{y:\, (x,y)\in W(\delta)\}.
$$
That is, the function $g:\overbar{B}^n(0,2R)\to\R$ parametrizes the bottom part of the boundary of $W(\delta)$. According to Lemma~\ref{T3}, the boundary of $W(\delta)$ is of class $C^{1,1}$ so $g\in C^{1,1}_{\rm loc}(B^n(0,2R))$ and hence $g$ is a convex function in $C^{1,1}(B^n(0,R))$.
Since $W(\delta)$ is contained in $W$ and hence in the epigraph of $f$, it follows that $g\geq f$.

Observe that
$$
\{x\in {B}^n(0,R):\, f(x)\neq g(x)\}\subset\pi(\partial W\setminus\partial W(\delta))
$$
and hence
$$
|\{x\in B^n(0,R):\, f(x)\neq g(x)\}|\leq
|\pi(\partial W\setminus\partial W(\delta))|\leq
\HH^{n}(\partial W\setminus\partial W(\delta))<\eps,
$$
because the orthogonal projection does not increase the Hausdorff measure and $\HH^n$ coincides with the Lebesgue measure in $\R^n$.
\end{proof}

\section{Proof of Theorem~\ref{T7}}
\label{S4}
\begin{lemma}
\label{T11}
Suppose that $f,g:B^n(0,R)\to\R$ are convex, $f\leq g$, and $g\in C^{1,1}(B^n(0,R))$. 
Then for almost all $x_o\in \{f=g\}$ we have
\begin{equation}
\label{eq16}
f(x)=f(x_o)+Df(x_o)(x-x_o)+\frac{1}{2}(x-x_o)^TD^2g(x_o)(x-x_o)+o(|x-x_o|^2).
\end{equation}
\end{lemma}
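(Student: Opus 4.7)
The plan is to produce matching quadratic upper and lower bounds for $f$ at almost every $x_o\in E:=\{f=g\}$. The setup draws on two standard inputs. First, since $g\in C^{1,1}(B^n(0,R))$, Rademacher's theorem applied to the Lipschitz map $\nabla g$ gives, at a.e.\ $x_o\in B^n(0,R)$, the first-order expansion $Dg(y)=Dg(x_o)+D^2g(x_o)(y-x_o)+o(|y-x_o|)$; integrating this along $[x_o,y]$ yields the second-order Taylor expansion
$$g(y)=g(x_o)+Dg(x_o)(y-x_o)+\tfrac12(y-x_o)^T D^2g(x_o)(y-x_o)+o(|y-x_o|^2),$$
where $D^2g(x_o)$ is symmetric (a standard fact for convex $C^{1,1}$ functions via mollification). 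Second, by Lebesgue's differentiation theorem almost every $x_o\in E$ is a density point of $E$. I fix $x_o$ at which both of these properties hold. Lemma~\ref{T19} then guarantees that $f$ is differentiable at every point $y\in E$, with $f(y)=g(y)$ and $Df(y)=Dg(y)$, and in particular at $x_o$.

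The upper bound is immediate from $f\leq g$ combined with the quadratic expansion of $g$ at $x_o$ and the identifications $f(x_o)=g(x_o)$, $Df(x_o)=Dg(x_o)$. For the lower bound, the key preliminary step is to upgrade the density-point property to the geometric statement $\dist(x,E)=o(|x-x_o|)$ as $x\to x_o$: if a sequence $x_k\to x_o$ violated this with ratio $\varepsilon$, then $B(x_k,\varepsilon|x_k-x_o|)\subset E^c\cap B(x_o,(1+\varepsilon)|x_k-x_o|)$ would force the upper density of $E^c$ at $x_o$ to be at least $(\varepsilon/(1+\varepsilon))^n$, contradicting the density assumption. For each $x$ near $x_o$ I therefore select $y=y_x\in E$ with $|x-y|=o(|x-x_o|)$ (taking $y_x=x$ when $x\in E$) and use convexity of $f$ together with differentiability at $y$ to write
$$f(x)\ge f(y)+Df(y)(x-y)=g(y)+Dg(y)(x-y),$$
into which I insert the two expansions of $g$ and $Dg$ at $x_o$.

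Writing $u=y-x_o$, $v=x-x_o$, $A=D^2g(x_o)$, $b=Dg(x_o)$, the substitution produces
$$f(x)\ge g(x_o)+b\cdot v-\tfrac12 u^TAu+u^TAv+o(|u|^2)+o(|u|)\,|v-u|,$$
and the symmetry of $A$ yields the identity $u^TAv-\tfrac12 u^TAu=\tfrac12 v^TAv-\tfrac12(v-u)^TA(v-u)$, which converts the principal terms into $f(x_o)+Df(x_o)\,v+\tfrac12 v^TAv$. It remains to check that every residual error is $o(|v|^2)$; since $|v-u|=o(|v|)$ forces $|u|=O(|v|)$, the terms $o(|u|^2)$, $o(|u|)|v-u|$, and $\|A\|\,|v-u|^2$ are all of that form. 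I expect the main obstacle to be the density-point step: the auxiliary point $y_x\in E$ has to be so close to $x$ that the cross term $\|A\|\,|v-u|^2$ and the linear-error term $o(|u|)\,|v-u|$ stay within $o(|x-x_o|^2)$, which is precisely what $|x-y_x|=o(|x-x_o|)$ ensures. Once that geometric refinement of the density condition is secured, the rest reduces to the symmetric quadratic identity above.
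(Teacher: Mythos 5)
Your argument is correct and takes essentially the same route as the paper: restrict to density points of $\{f=g\}$ at which $Dg$ is differentiable, get the upper bound from $f\leq g$, and get the lower bound by choosing $y\in\{f=g\}$ with $|x-y|=o(|x-x_o|)$ and applying the subgradient inequality $f(x)\geq g(y)+Dg(y)(x-y)$. The only difference is in the final estimation: the paper adds and subtracts $g(x)$, reducing the task to showing $g(x)-f(x)=o(|x-x_o|^2)$, which follows at once from the uniform $C^{1,1}$ bound \eqref{eq14} on $g(x)-g(y)-Dg(y)(x-y)$, whereas you insert the second-order expansions of both $g$ and $Dg$ at $x_o$ and close the argument with the symmetric-quadratic-form identity; both work, but the paper's rearrangement avoids the algebra and sidesteps any explicit reliance on the symmetry of $D^2g(x_o)$ in the lower-bound step.
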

\begin{remark}
Note that $D^2g(x_o)$ in \eqref{eq16} is not a typo.
Also, we do not need the assumption that $g$ is convex or $C^{1,1}$. With a small modification, the proof works
under the assumption that $f\leq g\in C^1$ and $Dg$ is differentiable at $x_o$.
\end{remark}
\begin{proof}
It follows from Lemma~\ref{T19} that $f$ is differentiable at every point of the set $\{f=g\}$ and that $Df=Dg$ in $\{f=g\}$. Since $Dg$ is Lipschitz continuous, $Dg$ is differentiable a.e. by Rademacher's theorem. Therefore, it suffices to prove the result whenever $x_o\in\{f=g\}$ is a density point of that set and $Dg$ is differentiable at $x_o$. 

To simplify notation, without loss of generality, we may assume that $x_o=0$, and we need to prove that
$$
f(x)-f(0)-Df(0)x-\frac{1}{2}x^TD^2g(0)x=o(|x|^2).
$$
Since $f(0)=g(0)$ and $Df(0)=Dg(0)$, 
the left hand side equals
$$
(f(x)-g(x))+\left(g(x)-g(0)-Dg(0)x-\frac{1}{2}x^TD^2g(0)x\right)=(f(x)-g(x))+o(|x|^2).
$$
We used here the fact that $g$ is twice differentiable at $0$ (Taylor's theorem with the Peano remainder).
Thus it remains to show that $g(x)-f(x)=o(|x|^2)$.

Since $0$ is a density point of the set $\{ f=g\}$, for any $x$ we can find $y\in \{ f=g\}$ such that $|x-y|=o(|x|)$. 
For if not, there is $\eps>0$ and $x_k\to 0$ such that $B(x_k,\eps |x_k|)\cap\{f=g\}=\varnothing$ and that contradicts the fact that $0$ is a density point of $\{f=g\}$.

Clearly, $f(y)=g(y)$ and $Df(y)=Dg(y)$ by Lemma~\ref{T19}.  Therefore,
$$
f(x)\geq f(y)+Df(y)(x-y)=g(y)+Dg(y)(x-y), 
$$
where the inequality is a consequence of convexity of $f$. 
Since $f\leq g$, the above inequality and \eqref{eq14} yield
$$
0\leq g(x)-f(x)\leq g(x)-g(y)-Dg(y)(x-y)\leq M|x-y|^2=o(|x|^2).
$$
The proof is complete. 
\end{proof}
\begin{proof}[Proof of Theorem~\ref{T7}]
Let $f:\R^n\to\R$ be convex. Let $R>0$ and $\eps>0$ and let $g$ be as in Corollary~\ref{T14}. It follows from Lemma~\ref{T11} that for almost all $x\in \{f=g\}$, \eqref{eq3} is satisfied with $D^2f(x):=D^2g(x)$. Hence \eqref{eq3} holds true in $B(0,R)$ outside a set of measure less than $\eps$. Since it is true for any $R>0$ and $\eps>0$, it follows that \eqref{eq3} is satisfied almost everywhere.
\end{proof}

\section{Proof of Theorem~\ref{T8}}
\label{S5}

If $f$ is twice differentiable at $0$ as in \eqref{eq3}, then we have
$$
f(x)=f(0)+Df(0)x+\frac{1}{2}x^TD^2f(0)x+R(x)=
f(0)+Df(0)x+\langle Ax,x\rangle +R(x),
$$
where $A=\frac{1}{2}D^2f(0)$ and $R(x)=o(|x|^2)$.
Note that
$$
a(r):=\sup_{0<|x|\leq 2r}\frac{|R(x)|}{|x|^2}\to 0
\qquad
\text{as $r\to 0^+$.}
$$
Moreover,
$$
|R(x)|\leq a\Big(\frac{|x|}{2}\Big)\, |x|^2\leq a(|x|)|x|^2.
$$
\begin{proof}[Proof of Theorem~\ref{T8}]
Let $f$ be twice differentiable at $x$ as in \eqref{eq3}. We need to prove \eqref{eq13}. Without loss of generality we may assume that $x=0$, and hence we need to prove that
$$
\lim_{x\to 0}\frac{\sigma_x-Df(0)-D^2f(0)x}{|x|}=0
\quad
\text{for any } \sigma_x\in \partial f(x).
$$
For $x,y\neq 0$, we have
$$
f(x)=f(0)+Df(0)x+\langle Ax,x\rangle +R(x),
\quad
f(y)=f(0)+Df(0)y+\langle Ay,y\rangle +R(y).
$$
Since $f(x)+\langle \sigma_x,y-x\rangle\leq f(y)$, we have
$$
\langle\sigma_x,y-x\rangle
\leq 
f(y)-f(x)=
Df(0)(y-x)+\langle A(x+y),y-x\rangle+R(y)-R(x).
$$
We used here the fact that $A$ is symmetric and hence $\langle Ax,y\rangle=\langle Ay,x\rangle$.
Let
$$
y=x+w,
\quad
\text{where}
\quad
w=\sqrt{a(|x|)}\,|x|z,\ |z|=1.
$$
Then
$$
\langle\sigma_x,w\rangle\leq Df(0)w+
\langle A(2x+w),w\rangle +R(y)-R(x),
$$
$$
\langle\sigma_x-Df(0)-2Ax,w\rangle\leq
\langle Aw,w\rangle +R(y)-R(x).
$$
If $|x|$ is sufficiently small, then $a(|x|)\leq 1$ and hence $|w|\leq |x|$, so $|y|\leq 2|x|$. Therefore,
$$
|R(y)|\leq a\Big(\frac{|y|}{2}\Big)\, |y|^2\leq 4a(|x|)|x|^2,
\qquad
|R(y)-R(x)|\leq 5a(|x|)|x|^2.
$$
Taking the supremum over all $z$ with $|z|=1$ we get
$$
|\sigma_x-Df(0)-2Ax|\sqrt{a(|x|)}|x|\leq |A|a(|x|)|x|^2+5a(|x|)|x|^2,
$$
and hence
$$
\frac{|\sigma_x-Df(0)-2Ax|}{|x|}\leq (|A|+5)\sqrt{a(|x|)}\to 0
\quad
\text{as $x\to 0$.}
$$
Since $2A=D^2f(0)$, the result follows.
\end{proof}

\section{Proof of Theorem~\ref{T2}}
\label{S6}
One of the differences between Corollary~\ref{T14} and Theorem~\ref{T2} is that the function $g$ in Corollary~\ref{T14} is defined on the ball $B^n(0,R)$ only and the main step in the proof of Theorem~\ref{T2} will be to show that the function $g$ can be extended from a ball $B^n(0,R-\delta)$ to a convex function of class $C^{1,1}(\R^n)$. We will do it by gluing the function $g$ with a quadratic function of the form $a|x|^2-b$ and we need to know how to glue convex functions while maintaining their smoothness.

The maximum of two convex functions
$$
\max\{u,v\}=\frac{u+v+|u-v|}{2}
$$
is convex, but even if $u,v\in C^\infty$, the maximum $\max\{u,v\}$ need not be $C^1$. To overcome this difficulty, we will use the so called smooth maximum that was introduced in \cite{Azagra}.

Let $\theta\in C^\infty(\R)$ be such that $\theta(t)=|t|$ if and only if $|t|\geq 1$, $\theta$ is convex, $\theta(t)=\theta(-t)$ for all $t$, and $1$-Lipschitz.

It easily follows that $\theta(t)>0$ for all $t$ and $|\theta'(t)|<1$ if and only if $|t|<1$. Then, we define the {\em smooth maximum} function $\MM:\R^2\to\R$ as,
$$
\MM(x,y):=\frac{x+y+\theta(x-y)}{2}.
$$
It is easy to see that $\MM$ is smooth, convex and 
\begin{equation}
\label{eq20}
\MM(x,y)=\max\{x,y\} 
\quad
\text{whenever} 
\quad
|x-y|\geq 1.
\end{equation}
It is also not difficult to prove that $\MM(x,y)$ is non-decreasing in $x$ and $y$, because partial derivatives of $\MM$ are non-negative, see \cite[Lemma~2.1(viii)]{Azagra}. This observation and convexity of $\MM$ yield (see \cite[Proposition~2.2(i)]{Azagra})
\begin{lemma}
\label{T16}
If $u,v:U\to\R$ are convex functions defined in an open convex set $U\subset\R^n$, then $\MM(u,v):U\to\R$ is convex.
\end{lemma}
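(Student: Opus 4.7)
The plan is to use the standard fact that the composition of a convex function which is non-decreasing in each argument with a tuple of convex functions is convex. The two ingredients needed are already stated just before the lemma: $\MM:\R^2\to\R$ is convex on $\R^2$, and $\MM$ is non-decreasing in each variable (because both partial derivatives of $\MM$ are non-negative, by \cite[Lemma~2.1(viii)]{Azagra}).

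Concretely, I would fix $x,y\in U$ and $t\in[0,1]$, set $z=tx+(1-t)y\in U$ (using convexity of $U$), and argue in two steps. First, by convexity of $u$ and $v$,
$$
u(z)\leq tu(x)+(1-t)u(y),\qquad v(z)\leq tv(x)+(1-t)v(y).
$$
Second, using that $\MM$ is non-decreasing in each coordinate, these pointwise inequalities can be pushed inside $\MM$:
$$
\MM(u(z),v(z))\leq \MM\bigl(tu(x)+(1-t)u(y),\, tv(x)+(1-t)v(y)\bigr).
$$
Finally, convexity of $\MM$ on $\R^2$ applied to the pair of points $(u(x),v(x))$ and $(u(y),v(y))$ gives
$$
\MM\bigl(tu(x)+(1-t)u(y),\, tv(x)+(1-t)v(y)\bigr)\leq t\MM(u(x),v(x))+(1-t)\MM(u(y),v(y)).
$$
Concatenating the two inequalities yields convexity of $\MM(u,v)$ on $U$.

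There is no real obstacle here; the only point to double-check is the monotonicity of $\MM$ in each variable, which reduces to verifying $\partial_x\MM=\tfrac12(1+\theta'(x-y))\geq 0$ and $\partial_y\MM=\tfrac12(1-\theta'(x-y))\geq 0$. Both are immediate from $|\theta'|\leq 1$, which holds because $\theta$ was chosen to be $1$-Lipschitz. Once this is noted, the composition argument above finishes the proof in a few lines.
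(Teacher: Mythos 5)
Your argument is correct and matches what the paper does: the paper cites the same standard fact (a function that is convex and non-decreasing in each coordinate, composed with convex functions, is convex) to Azagra's earlier work, with the monotonicity of $\MM$ following from the nonnegativity of its partial derivatives. You simply spell out the composition argument and the derivative computation in full, which is a fine elaboration of the same route.
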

It is also obvious that if $u,v\in C^{1,1}_{\rm loc}(U)$, then $\MM(u,v)\in C^{1,1}_{\rm loc}(U)$.

We will use the smooth maximum to prove the following extension result.
\begin{proposition}
\label{T17}
Let $h\in C^{1,1}_{\rm loc}(B^n(0,R))$ be a convex function. Then, for every $r\in (0,R)$, there is a convex function $H\in C^{1,1}(\R^n)$, such that
\begin{equation}
\label{eq9}
H(x)=h(x)
\quad
\text{whenever } |x|\leq r.
\end{equation}
\end{proposition}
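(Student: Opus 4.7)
The plan is to glue $h$ with a paraboloid $q(x):=a|x|^2-b$ using the smooth maximum $\MM$. The idea: if $a,b$ are chosen so that $q$ lies more than $1$ below $h$ on $\overline{B}^n(0,r)$ but more than $1$ above $h$ near the boundary of a slightly larger ball $B^n(0,s)$, then on $B^n(0,s)$ the function $\MM(h,q)$ coincides with $h$ on the inner ball (by \eqref{eq20}) and with $q$ on an annular neighbourhood of $\{|x|=s\}$, which lets us extend by $q$ outside.

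First I would fix $s\in(r,R)$ and note that $h$ is $C^{1,1}$ on the compact set $\overline{B}^n(0,s)$, so $m:=\min_{|x|\leq r}h$ and $M:=\max_{|x|\leq s}h$ are finite. Picking any $a>(M-m+3)/(s^2-r^2)$ and $b:=as^2-M-2$ gives $ar^2-b<m-1$ and $as^2-b=M+2$; combined with continuity of $q-h$, this yields $\delta>0$ such that
\begin{equation*}
q(x)<h(x)-1\ \text{for}\ |x|\leq r,\qquad q(x)>h(x)+1\ \text{for}\ s-\delta\leq |x|\leq s.
\end{equation*}

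Next I would define
\begin{equation*}
H(x):=\begin{cases}\MM(h(x),q(x)) & \text{if}\ |x|\leq s,\\ q(x) & \text{if}\ |x|\geq s.\end{cases}
\end{equation*}
On the overlap annulus $\{s-\delta\leq|x|\leq s\}$ we have $q-h\geq 1$, so $\MM(h,q)=\max(h,q)=q$ by \eqref{eq20}, and the two cases of $H$ agree there on an open set. Hence $H\in C^{1,1}(\R^n)$. Similarly, $h-q\geq 1$ on $\overline{B}^n(0,r)$ forces $H=h$ on $\overline{B}^n(0,r)$, which is \eqref{eq9}.

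The main obstacle is verifying that $H$ is convex on all of $\R^n$, since Lemma~\ref{T16} only hands us convexity separately on $B^n(0,s)$ (for $\MM(h,q)$) and on the exterior (for $q$). I would handle this by restricting $H$ to an arbitrary line $\ell$. The restriction is a continuous one-dimensional function which is convex on the open interval where $\ell$ meets $B^n(0,s)$ (by Lemma~\ref{T16}) and convex on each complementary ray (where $H=q$); these pieces share nondegenerate overlaps coming from the annulus $\{s-\delta\leq |x|\leq s\}$. Since a continuous function on an interval that is convex on two overlapping subintervals is convex on their union (e.g., by monotonicity of one-sided derivatives), $H|_\ell$ is convex, and consequently $H$ is convex on $\R^n$.
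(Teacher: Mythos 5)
Your proof is correct and follows essentially the same approach as the paper: choose a paraboloid $q(x)=a|x|^2-b$ lying strictly more than $1$ below $h$ on $\overline{B}^n(0,r)$ and strictly more than $1$ above $h$ near the sphere of a slightly larger radius, so that $\MM(h,q)$ equals $h$ inside and $q$ on an overlapping annulus, and then extend by $q$. The one place you are more explicit than the paper is in justifying that the glued function is convex; the paper merely asserts this after observing that the two convex pieces coincide on the annulus, whereas you spell out the standard line-restriction argument (a continuous function convex on overlapping subintervals of a line is convex on their union), which is a useful clarification but not a different method.
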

\begin{remark}
If $h\in C^k$, $k\in\mathbb{N}\cup\{\infty\}$, then $H\in C^k(\R^n)$. The proof remains the same.
\end{remark}
\begin{proof}
Choose $\rho\in (r,R)$ and let
$$
m:=\inf_{|x|\leq r} h,
\qquad
M:=\sup_{|x|=\rho}h.
$$
Then, we can find $a,b>0$ such that the function $q(x):=a|x|^2-b$ satisfies
\begin{equation}
\label{eq18}
q(x)<m-1
\qquad
\text{if } |x|\leq r
\end{equation}
\begin{equation}
\label{eq19}
q(x)>M+1
\qquad
\text{if } |x|=\rho,
\end{equation}
and we define
$$
H(x):=
\begin{cases}
\MM(h(x),q(x)) & \text{if } |x|\leq \rho,\\
q(x)           & \text{if } |x|>\rho.
\end{cases}
$$
It follows from \eqref{eq18} that $h(x)>q(x)+1$ if $|x|\leq r$, so  by \eqref{eq20}, we have $H(x)=\MM(h(x),q(x))=h(x)$ if $|x|\leq r$ and the condition \eqref{eq9} is satisfied. 
It follows from \eqref{eq19} that there is $\eps>0$ such that $q(x)>h(x)+1$ if $\rho\leq |x|\leq \rho+\eps$ and hence by \eqref{eq20}, $\MM(h(x),q(x))=q(x)$ when $\rho\leq |x|\leq \rho+\eps$.
Therefore, the convex functions $q(x)\in C^{1,1}(\R^n)$ and $\MM(h(x),q(x))\in C^{1,1}_{\rm loc}(B^n(0,R))$ coincide in the annulus $\rho\leq |x|\leq \rho+\eps$ and hence $H$ is convex in $\R^n$ with $H\in C^{1,1}_{\rm loc}(\R^n)$. Since $H=q\in C^{1,1}$ outside the compact ball $\overbar{B}^n(0,\rho)$, it follows that $H\in C^{1,1}(\R^n)$.
\end{proof}

\begin{proof}[Proof of Theorem~\ref{T2}]
Let $R>0$ be such that $|A\setminus \overbar{B}^n(0,R)|<\eps/2$. According to Corollary~\ref{T14} there is a convex function $\widetilde{g}\in C^{1,1}(B^n(0,2R))$ such that
$$
|\{x\in B^n(0,2R):\, f(x)\neq \widetilde{g}(x)\}|<\frac{\eps}{2}.
$$
Now, Proposition~\ref{T17} yields a convex function $g\in C^{1,1}(\R^n)$ such that $g(x)=\widetilde{g}(x)$ for $x\in \overbar{B}^n(0,R)$ and we have
$$
|\{x\in A:\, f(x)\neq g(x)\}|\leq
|A\setminus\overbar{B}^n(0,R)|+
|\{x\in B^n(0,R):\, f(x)\neq\widetilde{g}(x)\}|<\eps.
$$
\end{proof}

\section{Proofs of Theorems~\ref{global theorem}, \ref{corollary for convex hypersurfaces} and~\ref{global theorem for arbitrary U}}
\label{S7}

This section is less self-contained  than the others. The proof of the implication $(1)\Rightarrow(2)$ in Theorem~\ref{global theorem}
is easy and we will not show it here; see \cite[Proposition 1.10 and Theorem 2.5]{AzagraH}. The implication $(2)\Rightarrow(1)$ in Theorem~\ref{global theorem} is equivalent to the following result.
\begin{theorem}
\label{global theorem for coercive}
Let $f:\R^n\to\R$ be a convex function such that $\lim_{|x|\to\infty}f(x)=+\infty$. Then for every $\varepsilon>0$ there exists a convex function $g:\R^n\to\R$ of class $C^{1,1}_{{\rm loc}}(\R^n)$ such that $g\geq f$ and $|\{x\in \R^n : f(x)\neq g(x)\}|<\varepsilon$. 
\end{theorem}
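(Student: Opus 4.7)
My plan is to construct $g$ by applying the $\delta$-ball construction of Theorem~\ref{T1} directly to the epigraph of $f$ rather than to a bounded truncation. Let $E=\{(x,y)\in\R^{n+1}: y\geq f(x)\}$ be the epigraph of $f$; it is a closed convex set with non-empty interior. Coercivity of $f$ guarantees that every sublevel set $\{f\leq M\}$ is bounded, so $E$ is an unbounded convex body containing no lines. For $\delta>0$, I define
$$
W:=E(\delta):=\bigcup\{\overline B(z,\delta):\overline B(z,\delta)\subset E\}.
$$
As in Lemma~\ref{T18} and the proof of Lemma~\ref{T5}, $W$ equals the Minkowski sum $E_\delta+\overline B(0,\delta)$, where $E_\delta=\{z\in E:\dist(z,\partial E)\geq \delta\}$, so $W$ is convex.

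Next, I would verify that $W$ is upward-closed and that its vertical projection $\pi:\R^{n+1}\to\R^n$ satisfies $\pi(W)=\R^n$: upward-closedness is inherited from $E$ by translating tangent $\delta$-balls upward, and local boundedness of $f$ lets us fit a sufficiently high ball $\overline B((x,y),\delta)\subset E$ above any given $x\in\R^n$. Thus $g(x):=\inf\{y:(x,y)\in W\}$ is a well-defined convex function on $\R^n$ whose epigraph is $W$, and $g\geq f$ because $W\subset E$. Since $W$ is a union of closed balls of fixed radius $\delta$, the second (local) proof of Lemma~\ref{T3} applies verbatim to the unbounded set $W$ and shows $\partial W\in C^{1,1}_{\rm loc}$; as $\partial W$ coincides with the graph of $g$, we obtain $g\in C^{1,1}_{\rm loc}(\R^n)$.

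It remains to prove the measure estimate $|\{f\neq g\}|<\varepsilon$ for $\delta$ chosen sufficiently small. Note $\{f\neq g\}$ is the $\pi$-image of $\{(x,f(x))\in\partial E\setminus W\}$, and $\pi$ is $1$-Lipschitz, so it suffices to bound the relevant $\HH^n$-measure. For the bounded truncation $E^M:=E\cap\{y\leq M\}$, which is a bounded convex body by coercivity, Theorem~\ref{T1} gives $\HH^n(\partial E^M\setminus\partial E^M(\delta))<\varepsilon/2$ for all small $\delta$; a closed $\delta$-ball in $E$ tangent to the graph of $f$ at $(x,f(x))$ with $f(x)<M-\delta$ automatically lies in $E^M$, so this controls the contribution from $\{f\leq M-\delta\}$.

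The main obstacle is bounding the remaining portion $\{f\neq g\}\cap\{f>M-\delta\}$, since $\{f>M-\delta\}$ has infinite Lebesgue measure in $\R^n$ for every finite $M$. My plan for this is to exhaust $\R^n$ by the bounded sets $\{f\leq M_k\}$ with $M_k\uparrow\infty$, apply Theorem~\ref{T1} to each truncation $E^{M_k}$ with rapidly shrinking tolerance $\varepsilon/2^k$, and combine with the fact that at almost every $x\in\R^n$, Alexandrov's Theorem~\ref{T7} (already proved in Section~\ref{S4}) provides a tangent $\delta$-ball inside $E$ for all sufficiently small $\delta$. A diagonal/Egorov-type argument then selects a single $\delta>0$ that works simultaneously across the exhaustion. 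This bookkeeping — reconciling the Hausdorff-area control from Theorem~\ref{T1} on bounded truncations with the a.e.\ pointwise tangent-ball existence from Alexandrov on the unbounded tail — is the technical heart of the proof, and is the only step where we are not simply translating the bounded theory verbatim to the unbounded setting.
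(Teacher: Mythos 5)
Your approach — applying the $\delta$-ball construction $E(\delta)$ directly to the unbounded epigraph $E$ and projecting — is genuinely different from the paper's. The paper does \emph{not} try to find a single $\delta>0$ that works on all of $\R^n$. Instead, it invokes Corollary~\ref{T14} on the balls $B^n(0,2k)$ to produce convex functions $g_k\in C^{1,1}(B^n(0,2k))$ with $g_k\geq f$ and $|\{x\in B^n(0,2k): f(x)\neq g_k(x)\}|<\eps/2^k$, where the ball radius implicit in each $g_k$ is allowed to depend on $k$. These are patched together by forming $\varphi:=\inf_k\bigl(g_k+\theta_k(|\cdot|)\bigr)$ for bump functions $\theta_k$ supported on the annuli $k-2<|x|<k+1$, so that $f\leq\varphi\leq g_k$ on $A_k=\{k-1\leq|x|\leq k\}$, and finally $g:=\operatorname{conv}(\varphi)$. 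Regularity of $g$ then comes from the quantitative convex-envelope Lemma~\ref{regularity of convex envelope} (Kirchheim--Kristensen), not from Lemma~\ref{T3}. So the two proofs share the building block (Corollary~\ref{T14}), but the globalization mechanism is entirely different.

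The obstacle you flag at the end is the real issue, and as sketched it is not resolved. Writing $N_\delta:=\{x\in\R^n: f(x)\neq g_\delta(x)\}$ for your construction, you need a single $\delta>0$ with $|N_\delta|<\eps$. You correctly observe that $N_\delta$ decreases as $\delta\downarrow 0$ and that $\bigcap_{\delta>0}N_\delta$ is null (via Lemma~\ref{T6} applied to each truncation $E^M$, as in Corollary~\ref{T4}). But continuity from above of Lebesgue measure requires $|N_{\delta_0}|<\infty$ for some $\delta_0>0$, and nothing in your argument supplies this: a priori $N_\delta$ could have infinite measure for every $\delta>0$, since $\{f>M\}$ has infinite measure for every $M$. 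The ``diagonal/Egorov-type argument'' does not fill the gap — Egorov's theorem is a finite-measure statement, and applying Theorem~\ref{T1} to the truncations $E^{M_k}$ forces the admissible $\delta$ to shrink as $k\to\infty$, so no single $\delta$ emerges to control the whole tail. Also note that twice differentiability in the sense of \eqref{eq3} at a point does not by itself produce a tangent ball of any fixed radius; what gives tangent balls a.e.\ is Lemma~\ref{T6} on bounded truncations, and McMullen's argument there relies essentially on $\HH^{n-1}(\partial K)<\infty$ and on the dilation factor $\lambda(r)\to 1$, both of which fail for the unbounded body $E$. Establishing $|N_{\delta_0}|<\infty$ would require a genuinely new global estimate on the unbounded epigraph; the paper's patchwork-plus-convex-envelope construction is designed precisely to avoid having to prove one.
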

Next, we give a proof of Theorem \ref{global theorem for coercive} that greatly simplifies the one provided by \cite{AzagraH}. Its main ingredients are Corollary~\ref{T14} above and the following lemma (whose elementary proof can be found in \cite[Lemma 5.3]{Azagra2}, which in turn is a refinement of the result of \cite{KirchheimK}).
\begin{lemma}
\label{regularity of convex envelope}
Let $\varphi:\R^n\to\R$ be a continuous function such that 
$
\lim_{|x|\to\infty}\varphi(x)=+\infty
$
and such that for every $R>0$ there exists $C_R>0$ so that for every $x, h\in B^n(0,R)$ we have
$$
\varphi(x+h)+\varphi(x-h)-2\varphi(x)\leq C_R|h|^2.
$$
Then the function $F=\operatorname{conv}(\varphi)$ has a similar property: for every $R>0$ there exists $C'_R>0$ such that for every $x, h\in B^n(0,R)$ we have
$$
F(x+h)+F(x-h)-2F(x)\leq C'_R |h|^2.
$$
Therefore $F\in C^{1,1}_{\rm loc}(\R^n)$.
\end{lemma}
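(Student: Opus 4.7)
The plan is to exploit the Carath\'eodory representation of the convex envelope: for every $x\in\R^n$ and every $\delta>0$, there exist $\lambda_1,\dots,\lambda_{n+2}\geq 0$ with $\sum_i\lambda_i=1$ and points $x_1,\dots,x_{n+2}\in\R^n$ such that $x=\sum_i\lambda_i x_i$ and $\sum_i\lambda_i\varphi(x_i)\leq F(x)+\delta$. (The coercivity of $\varphi$ forces $\varphi$ to be bounded below, so $F$ is a real-valued convex function.) The key observation is that the \emph{same} weights give convex combinations for the shifted points, $x\pm h=\sum_i\lambda_i(x_i\pm h)$, and since $F$ is by definition the largest convex minorant of $\varphi$, one immediately gets $F(x\pm h)\leq\sum_i\lambda_i\varphi(x_i\pm h)$. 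Adding these and subtracting $2F(x)\geq 2\sum_i\lambda_i\varphi(x_i)-2\delta$ yields the crucial inequality
$$
F(x+h)+F(x-h)-2F(x)\leq\sum_i\lambda_i\bigl[\varphi(x_i+h)+\varphi(x_i-h)-2\varphi(x_i)\bigr]+2\delta.
$$

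The main obstacle is to justify that, for $x,h\in B^n(0,R)$, the points in a near-optimal decomposition can be chosen so that $x_i$, $x_i+h$, and $x_i-h$ all lie in a common ball $B^n(0,R'')$ whose radius $R''$ depends only on $R$ and on $\varphi$; then the hypothesis bounds each bracketed term above by $C_{R''}|h|^2$, and since the $\lambda_i$ sum to $1$, letting $\delta\to 0$ yields $F(x+h)+F(x-h)-2F(x)\leq C_{R''}|h|^2$. Coercivity of $\varphi$ is used here. Setting $M_R:=\sup_{B^n(0,R)}\varphi<\infty$ and $m:=\inf_{\R^n}\varphi>-\infty$, the estimate $\sum_i\lambda_i\varphi(x_i)\leq M_R+\delta$ together with $\sum_{i\neq j}\lambda_i\varphi(x_i)\geq(1-\lambda_j)m$ forces $\lambda_j\leq(M_R+\delta-m)/(\varphi(x_j)-m)$, so if $|x_j|$ is large then $\varphi(x_j)\to\infty$ by coercivity and $\lambda_j\to 0$. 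Turning this weight-decay into a genuine bound $|x_i|\leq R'(R)$ is the delicate point: one can either use the face structure of $\operatorname{epi}(F)$ (the support of an optimal decomposition consists of minimizers of $y\mapsto\varphi(y)-\langle p,y\rangle$ for some $p\in\partial F(x)$, whose sub-level sets are compact once the hypothesis of local semi-concavity is combined with coercivity to prevent $\varphi$ from growing slower than the local Lipschitz slopes of $F$ on $B^n(0,R)$), or directly replace any escaping atom by a bounded one at a cost absorbed into $\delta$. I expect this uniform-support argument to be the technical heart of the proof, and it is essentially what \cite[Lemma~5.3]{Azagra2} carries out in detail.

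Once the semi-concavity estimate $F(x+h)+F(x-h)-2F(x)\leq C'_R|h|^2$ is in hand, the final conclusion $F\in C^{1,1}_{\rm loc}(\R^n)$ is standard: combining it with the convexity inequality $F(x+h)+F(x-h)-2F(x)\geq 0$ gives the two-sided bound
$$
0\leq F(x+h)+F(x-h)-2F(x)\leq C'_R|h|^2,\qquad x,h\in B^n(0,R),
$$
which is a well-known characterization of $C^{1,1}_{\rm loc}$ regularity for convex functions. Indeed the upper bound is equivalent to $F-\tfrac{C'_R}{2}|x|^2$ being locally concave, so the distributional Hessian of $F$ is a matrix-valued measure satisfying $0\leq D^2F\leq C'_R I$; this bounded density forces $\nabla F$ to be locally Lipschitz with Lipschitz constant controlled by $C'_R$ on each ball, completing the proof.
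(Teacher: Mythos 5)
The paper itself does not prove this lemma: it quotes it from \cite[Lemma 5.3]{Azagra2} (a refinement of \cite{KirchheimK}), so your attempt has to be judged on whether it stands alone, and it does not. The framework you set up is correct and is the natural one: coercivity gives a finite lower bound, so $F=\operatorname{conv}(\varphi)$ is finite and admits the Carath\'eodory representation; using the same weights for the shifted points gives $F(x\pm h)\le\sum_i\lambda_i\varphi(x_i\pm h)$ and hence
$$
F(x+h)+F(x-h)-2F(x)\le\sum_i\lambda_i\bigl[\varphi(x_i+h)+\varphi(x_i-h)-2\varphi(x_i)\bigr]+2\delta;
$$
and your closing step (convexity plus the one-sided second-difference bound implies $F\in C^{1,1}_{\rm loc}$) is standard and fine. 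But the entire proof hinges on the step you yourself call the technical heart, namely that a near-optimal decomposition for $x\in B^n(0,R)$ can be taken with all atoms in a ball $B^n(0,R'')$ depending only on $R$ and $\varphi$, and this is not proved; you only gesture at two possible routes and defer to the cited reference.

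Moreover, neither route works as you state it. The weight-decay bound $\lambda_j\le (M_R+\delta-m)/(\varphi(x_j)-m)$ is not enough: the hypothesis only controls second differences of $\varphi$ at $x_j$ by $C_S|h|^2$ with $S\approx|x_j|+R$, and $C_S$ may grow arbitrarily fast while $\varphi$ (hence the decay of $\lambda_j$) grows arbitrarily slowly, so $\lambda_j C_S|h|^2$ is uncontrolled; and if you instead try to discard or relocate an escaping atom, the barycenter shift $\lambda_j x_j$ need not be small when $\varphi$ has sublinear growth, so the error cannot simply be absorbed into $\delta$. The first route is also false as stated: coercivity plus the local semiconcavity hypothesis does not make the (near-)minimizing set of $y\mapsto\varphi(y)-\langle p,y\rangle$ compact for $p\in\partial F(x)$. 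For instance, take $n=1$ and $\varphi$ smooth, convex, equal to $|y|$ for $|y|\ge 1$; then $F=\varphi$, at $x=2$ one has $p=1$, the set $\{y:\varphi(y)-y\le t\}$ contains the ray $[1,\infty)$ for every $t\ge 0$, and there are exactly optimal decompositions of $x=2$ using one atom arbitrarily far away. So any correct localization must cope with unbounded contact sets, i.e.\ with the faces and recession directions of $\operatorname{epi}F$, and that is precisely the content of \cite[Lemma 5.3]{Azagra2} that your proposal leaves out. In short: right strategy, but a genuine gap at the central step.
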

Here $\operatorname{conv}(\varphi)$ denotes the convex envelope of $\varphi$, defined as the supremum of all convex functions less than or equal to $\varphi$.
\begin{proof}[Proof of Theorem \ref{global theorem for coercive}.]
By Corollary~\ref{T14}, for every $k\in\N$ we can find a convex function $g_k\in C^{1,1}(B^n(0,2k))$ such that $f\leq g_k$ and
$$
|\{ x\in B^n(0,2k):\, f(x)\neq g_k(x)\}|<\eps/2^k.
$$
For every $k\in\N$, let $\theta_k:(k-2, k+1)\to [0,\infty)$ be a $C^{\infty}$ convex function such that: 
\begin{enumerate}
\item $\theta_k(t)=0$ iff $k-1\leq t\leq k$;
\item $\lim_{t\to (k-2)^{+}}\theta_k(t)=+\infty$, and
\item $\lim_{t\to (k+1)^{-}}\theta_k(t)=+\infty$.
\end{enumerate}
Define $\varphi_k:\R^n\to (-\infty, +\infty]$ by 
$$
\varphi_k(x)=g_k(x)+\theta_k(|x|) \textrm{ if } k-2<|x|<k+1, \textrm{ and } \varphi_k(x)=+\infty \textrm{ otherwise.}
$$
When $n\geq 2$ and $k\geq 2$, the function $\varphi_k$ is not convex, but we do not need it to be.

Note that $\varphi_k(x)=g_k(x)$ on the annulus $A_k:=\{x: k-1\leq |x|\leq k\}$ (or ball in the special case $k=1$), and consider $\varphi:\R^n\to\R$ defined by
$
\varphi(x)=\inf_{k\in\N}\varphi_k(x).
$
It is clear that 
$$
f\leq \varphi \textrm{ on } \R^n, \textrm{ and }
\varphi\leq g_k \textrm{ on } A_k, \textrm{ for each } k\in\N,
$$
and in particular 
$
\lim_{|x|\to\infty}\varphi(x)=+\infty,
$
though $\varphi$ is finite everywhere. As a matter of fact, it is easily seen that $\varphi$ is locally the minimum of at most three continuous functions, and therefore it is continuous on $\R^n$. More precisely, for each $k\in\N$ we have that
$$
\varphi(x)=\min\{\varphi_{k-1}(x), \varphi_k(x), \varphi_{k+1}(x)\} \textrm{ for every } x\in A_k.
$$
Moreover, since $\lim_{|x|\to k^{-}}\varphi_{k-1}(x)=+\infty=\lim_{|x|\to k^{+}}\varphi_{k+2}(x)$ and $\varphi_k$ and $\varphi_{k+1}$ are bounded and $C^{1,1}$ on a neighborhod of the sphere $\{x: |x|=k\}$, there exist some $M_k, \delta_k>0$ such that $\varphi(x)=\min\{\varphi_{k}(x), \varphi_{k+1}(x)\}$ and 
$
\varphi_j(x+h)+\varphi_j(x-h)-2\varphi_j(x)\leq M_k|h|^2
$ 
for all $k-\delta_k\leq |x|\leq k+\delta_k$, $|h|\leq\delta_k$, and $j=k, k+1$. 
These inequalities easily follow from \eqref{eq14}.
This implies that
$$
\varphi(x+h)+\varphi(x-h)-2\varphi(x)\leq M_k|h|^2
$$
for all $k-\delta_k\leq |x|\leq k+\delta_k$, and $|h|\leq\delta_k$.
Similarly, there exist $M_k', \delta_k'>0$ such that $\varphi(x)=\min\{\varphi_{k-1}(x), \varphi_{k}(x), \varphi_{k+1}(x)\}$ and
$
\varphi_j(x+h)+\varphi_j(x-h)-2\varphi_j(x)\leq M_k'|h|^2
$
for all $k-1+\delta_{k-1}-\delta_k'\leq |x|\leq k-\delta_k+\delta_k'$, $|h|\leq\delta_k'$, and $j=k-1, k, k+1$, implying that
$$
\varphi(x+h)+\varphi(x-h)-2\varphi(x)\leq M_k'|h|^2
$$ 
for all $k-1+\delta_{k-1}-\delta_k'\leq |x|\leq k-\delta_k+\delta_k'$, and $|h|\leq\delta_k'$.
Since every ball is contained in a finite union of sets $A_k$, these estimates imply that for every $R>0$ there exist $C_R>0$ and $\delta_R>0$ so that for every $x\in B^n(0,R)$ and $|h|<\delta_R$ we have
\begin{equation}
\label{eq21}
E_h(x):=\varphi(x+h)+\varphi(x-h)-2\varphi(x)\leq C_R|h|^2.
\end{equation}
On the other hand, for $\delta_R\leq |h|\leq R$, we obviously have 
$
E_h(x)\leq 4M \leq\widetilde{C_R}|h|^2,
$
where $M:=\sup_{z\in B(0, 2R)}\varphi(z)$ and $\widetilde{C_R}=4M/\delta_R^2$. So by replacing $C_R$ with $\max\{C_R, \widetilde{C_R}\}$ we certainly have
\begin{equation}
\label{eq22}
\varphi(x+h)+\varphi(x-h)-2\varphi(x)\leq C_R|h|^2
\quad
\text{for all } x,h\in B^n(0,R).
\end{equation}

Therefore, \eqref{eq22} and Lemma \ref{regularity of convex envelope}, imply that the function $g:=\operatorname{conv}(\varphi)$ is of class $C^{1,1}_{\rm loc}$ (and it obviously satisfies $f\leq g\leq\varphi$).
Since $|\{ x\in B^n(0,2k):\, f(x)\neq g_k(x)\}|<\eps/2^k$ and $f\leq g\leq \varphi\leq g_k$ on $A_k$, it follows that
$
|\{ x\in A_k :\, f(x)\neq g(x)\}|<\eps/2^k
$
for every $k\in\N$, which implies that $|\{x\in \R^n : f(x)\neq g(x)\}|\leq \varepsilon$. 
\end{proof}

\begin{proof}[Proof of Theorem~\ref{corollary for convex hypersurfaces}.]
For the proof of the implication (1)$\Rightarrow$(2), see~\cite[Corollary 1.13]{AzagraH}. Regarding the implication (2)$\Rightarrow$(1),
the same proof as in \cite[Corollary 1.13]{AzagraH} gives us a (possibly unbounded) convex body $W_{\eps}$ with boundary $S_{\varepsilon}$ such that $W_{\varepsilon}=\frac{1}{t_0} g^{-1}(-\infty, t_0]$ for some $t_0\in (1, 2)$ and some convex function $g\in C^{1,1}_{\rm loc}(\R^n)$ such that $\mathcal{H}^{n-1}(S\setminus S_{\eps})<\eps/2$, $S=\partial W$, and $\mu\leq g$, where $\mu$ is the Minkowski functional of $W$, hence $W_{\eps}\subset W$. 

Now it suffices to show that $\pi_{W_\eps}(S)=S_\eps$,\footnote{This is very easy if $W$ is bounded and we used this fact in the proof of Theorem~\ref{T1}.}, where $\pi_{W_\eps}$ is the nearest point projection defined in \eqref{eq17}, because this fact and Lemma~\ref{T15} will imply
$
\mathcal{H}^{n-1}(S_{\eps}\setminus S)\leq \mathcal{H}^{n-1}(S\setminus S_{\eps}),
$
and hence $\mathcal{H}^{n-1}\left(S_{\eps}\triangle S\right)<\eps$.

Therefore, it remains to show that if $x\in S_\eps$, then there is $z\in S$ such that $\pi_{W_\eps}(z)=x$. Let $\nu(x)$ be the unit outward normal to $S_\eps$ at $x$. It suffices to show that the ray
$
R_x:=\{x+t\nu(x):\, t\geq 0\}
$
intersects $S$ at some point $z$, because clearly, $\pi_{W_\eps}(z)=x$. Suppose to the contrary that $R_x$ does not intersect with $S$ i.e. $R_x\subset\operatorname{int} W$.
The tangent hyperplane to $S_\eps$ at $x$ is defined by
$
T_x:=\{x+v:\langle v,\nu(x)\rangle=0\} 
$
and clearly $S_\eps\cap F_x=\varnothing$, where
$
F_x:=\{x+v:\langle v,\nu(x)\rangle>0\}
$
is an open half-space bounded by $T_x$.
Since $x\in\operatorname{int}W$, there is $\delta>0$ such that $D_{2\delta}\subset\operatorname{int}W$, where 
$
D_{2\delta}:=\{x+v\in T_x:\, |v|<2\delta\}
$
is the ball in $T_x$ centered at $x$ and of radius $2\delta$.
Since $D_{2\delta}\cup R_x\subset\operatorname{int}W$, it follows from the convexity of $W$ that $C_x\subset \operatorname{int} W$, where
$
C_x:=\{p+t\nu(x):\, p\in \partial D_\delta,\ t>0\}
$
is the side surface of a half-cylinder. 

Since $S$ does not contain any line, it follows that $W$ does not contain any line (cf.\ the argument at the beginning of the proof of Theorem~\ref{global theorem for arbitrary U}).
Therefore, for any $p\in R_x$, and any unit vector $v$ parallel to $T_x$, the lines
$
L_{p,v}:=\{p+tv:\, t\in\R\}
$
must intersect $S$ at least at one point. Denote all such points in $S$ by $A$. Since $A\subset F_x$, $F_x\cap S_\eps=\varnothing$, we have that $A\subset S\setminus S_\eps$. It is easy to see that $\HH^{n-1}(A)=\infty$ and hence $\HH^{n-1}(S\setminus S_\eps)=\infty$ which is a contradiction. To show that $\HH^{n-1}(A)=\infty$, note that the radial projection $\pi$ of $A$ onto $C_x$ along lines $L_{p,v}$ is $1$-Lipschitz and hence $\HH^{n-1}(A)\geq \HH^{n-1}(\pi(A))$. Now, for each two antipodal points in each sphere of radius $\delta$ in $C_x$ that is parallel to $T_x$, at least one belongs to $\pi(A)$. The mapping $\Phi:C_x\to C_x$ that maps points in $C_x$ to antipodal points is an isometry of $C_x$. Hence $\HH^{n-1}(\pi(A))=\HH^{n-1}(\Phi(\pi(A))$. Therefore, 
$$
2\HH^{n-1}(A)\geq 2\HH^{n-1}(\pi(A))\geq \HH^{n-1}(\pi(A)\cup \Phi(\pi(A)))=\HH^{n-1}(C_x)=\infty.
$$
The proof is complete.
\end{proof}

\begin{proof}[Proof of Theorem~\ref{global theorem for arbitrary U}] 
If an unbounded convex body $V$ contains a line $L$, then since $V$ is closed, it is easy to see that $V$ contains all lines parallel to $L$ that intersect with $V$. In particular $\partial V$ is the union of lines parallel to $L$.

\noindent
(2)$\Rightarrow$(1). 
Let us define $V_f$ as the closure of the epigraph of $f$, then $V_f$ is an unbounded convex body in $\R^{n+1}$. Since the graph of $f$ does not contain a line, $\partial V_f$ does not contain any line and we may apply
Theorem~\ref{corollary for convex hypersurfaces} and Remark~\ref{R2} to find a $C^{1,1}_{\textrm{loc}}$ convex body $W\subseteq V_f$ such that $\mathcal{H}^{n}(\partial V_f\setminus \partial W)<\varepsilon$. 
Since the projection $\pi:\R^n\times\R\to\R^n$ is 1-Lipschitz, it follows that $g(x):=\inf\{y: (x,y)\in W\}$ is a $C^{1,1}_{\textrm{loc}}$ convex function such that $g\geq f$, $|\{x\in U: g(x)\neq f(x)\}|<\varepsilon$. Note that $g$ is finite on all of $U$. Indeed, let $A:=\{x\in U: g(x)<\infty\}$; we understand that $g(x)=\infty$ if the line $\{(x,t): t\in\R\}$ does not intersect $W$. The set $A$ is convex because $W$ is convex. If $A\neq U$ then, for some $x_0\in U$, some number $c$ and some linear function $\ell:\R^n\to\R$, we have $\ell(x_0)\geq c\geq \ell(x)$ for all $x\in A$, implying that for all $x\in U\cap\ell^{-1}(c, \infty)$ the vertical line $\{(x,t): t\in\R\}$ does not intersect $W$. But it is easy to see that the set $\partial V_f\cap \{(x,t) : x\in \overbar{U}\cap\ell^{-1}(c, \infty)\}$ has infinite Hausdorff $n$-dimensional measure, therefore it must intersect $\partial W$. Hence $g(x)=f(x)<\infty$ for some $x\in U\cap\ell^{-1}(c, \infty)$, a contradiction.

\noindent
(1)$\Rightarrow$(2). Suppose to the contrary  that $f$ satisfies (1) and that the graph of $f$ contains a line $L$. Then, as we observed above, the graph of $f$ is the union of lines parallel to $L$. Thus, $U$ is the union of lines parallel to $\pi(L)$ and clearly, $f$ is affine on each such a line. Now, an argument similar to the proof of \cite[Proposition 1.10]{AzagraH} yields that if $g:U\to\R$ is a convex function such that $|\{x : f(x)\neq g(x)\}|<\infty$, then $g=f$. Hence $g\not\in C^{1,1}_{\rm loc}$, because $f\not\in C^{1,1}_{\rm loc}$ and we arrive to a contradiction with (1).
\end{proof}

\end{document}